\documentclass[letterpaper,11pt,reqno]{amsart}
\usepackage[utf8]{inputenc}
\hoffset=-0.6in
\voffset=-0.6in
\textwidth=6in
\textheight=9in
\usepackage{amsmath}
\usepackage{amsthm}
\usepackage{amssymb}
\usepackage{amsfonts,mathrsfs}
\usepackage{stmaryrd}
\usepackage{amsxtra}  
\usepackage{epsfig}
\usepackage{verbatim} 
\usepackage{enumerate}
\usepackage{xcolor}
\usepackage{enumitem}
\usepackage{bigints}
\usepackage{mathrsfs}
\usepackage{tikz-cd}
\usepackage{bm,bbm}
\usepackage{hyperref}
\usepackage[all]{xy}
\usepackage{mathtools, hyperref}
\usepackage{tikz}
\usepackage{tikz-cd}
\usetikzlibrary{shapes}
\usetikzlibrary{plotmarks}
\usepackage{booktabs}
\usepackage{siunitx}



\theoremstyle{plain}
\newtheorem{theorem}[equation]{Theorem}
\newtheorem{proposition}[equation]{Proposition}
\newtheorem{lemma}[equation]{Lemma}
\newtheorem{corollary}[equation]{Corollary}
\theoremstyle{definition}
\newtheorem{definition}[equation]{Definition}
\newtheorem{conjecture}[equation]{Conjecture}

\theoremstyle{remark}
\theoremstyle{remark}
\newtheorem{remark}[equation]{Remark}
\numberwithin{equation}{section}

\newcommand{\norm}[1]{\left\Vert#1\right\Vert}
\newcommand{\ol}{\overline}

\newcommand{\wt}{\widetilde}

\newcommand{\xdownarrow}[1]{%
  {\left\downarrow\vbox to #1{}\right.\kern-\nulldelimiterspace}
}



\newcommand{\cs}{{\mathcal S}}

\newcommand{\C}{{\mathbb C}}
\newcommand{\D}{{\mathbb D}}

\newcommand{\h}{{\mathbb H}}

\newcommand{\Q}{{\mathbb Q}}
\newcommand{\R}{{\mathbb R}}

\newcommand{\Z}{{\mathbb Z}}



\begin{document}

\title[Arithmetic properties and zeros of the Bergman kernel]{Arithmetic properties and zeros of the Bergman kernel on a class of quotient domains}

\author{Luke D. Edholm and Vikram T. Mathew}

\begin{abstract}
An effective formula for the Bergman kernel on $\h_{\gamma} = \{|z_1|^\gamma < |z_2| < 1 \}$ is obtained for rational $\gamma = \frac{m}{n} >1$.
The formula depends on arithmetic properties of $\gamma$, which uncovers new symmetries and clarifies previous results.
The formulas are then used to study the Lu Qi-Keng problem. 
We produce sequences of rationals $\gamma_j \searrow 1$, where each $\h_{\gamma_j}$ has a Bergman kernel with zeros (while $\h_1$ is known to have a zero-free kernel), resolving an open question on this domain class. 
\end{abstract}

\thanks{The first author was supported in part by Austrian Science Fund (FWF) grants: DOI 10.55776/I4557 and DOI 10.55776/P36884.}
\thanks{{\em 2020 Mathematics Subject Classification:}  32A25 (Primary); 32A60, 30C15 (Secondary)}
\address{Department of Mathematics\\Universit\"at Wien, Vienna, Austria}
\email{luke.david.edholm@univie.ac.at}
\address{Department of Industrial Engineering and Operations Research\\Columbia University, New York, New York}
\email{vtm2111@columbia.edu}

\maketitle

\section{Introduction}

Given a domain (an open connected set) $\Omega \subset \C^n$, the Bergman space $A^2(\Omega)$ is the subspace of $L^2(\Omega)$ consisting of holomorphic functions.
The Bergman space is itself a Hilbert space, so there is a canonical orthogonal projector $P_\Omega: L^2(\Omega) \to A^2(\Omega)$, called the Bergman projection.
This projection is carried out by integrating against the Bergman kernel, the reproducing kernel function $K_\Omega : \Omega \times \Omega \to \C$ of $A^2(\Omega)$:
\begin{equation}\label{E:def-of;Bergman-proj}
P_\Omega f(z) = \int_\Omega K_\Omega(z,w) f(w)\,dV(w), \qquad f\in L^2(\Omega).
\end{equation}
The Bergman kernel is uniquely determined by the following three properties: for each fixed $w \in \Omega$, the function $K_\Omega(\cdot,w) \in A^2(\Omega)$; it is conjugate symmetric, i.e. $K_\Omega(z,w) = \ol{K_\Omega(w,z)}$; and it reproduces holomorphic functions, i.e. for $f \in A^2(\Omega)$, $P_\Omega f=f$.
In addition, $K_\Omega$ satisfies many useful extremal and invariance properties; see e.g. \cite{bergmanbook,Krantz_scv_book,JarPflBook08,Krantz_BergmanBook1_2013}.
In Section \ref{SS:BergKernProps} we recall those properties relevant to the present article.

For $\gamma>0$, define a family of bounded, non-smooth, weakly pseudoconvex domains called generalized Hartogs triangles by setting
\begin{equation}\label{E:def=gen-Hartogs}
\h_\gamma = \{(z_1,z_2) \in \C^2 : |z_1|^\gamma < |z_2| < 1 \}.
\end{equation}
When $\gamma$ is rational, $\h_\gamma$ can be realized as 
a quotient of $\D \times \D^*$ induced by a monomial mapping (see Section \ref{SS:Monomial-maps}).
Many aspects of Bergman theory on these domains have been studied in recent years; see \cite{chakzeytuncu,Edh16,EdhMcN16,EdhMcN16b,EdhMcN20,ChEdMc19,KhLiTh19,CKMM20,bcem,HuoWick2020a, HuoWick2020b,ChrKoe23} and the references therein.
The purpose of this paper is two fold: 
(1) to give a full and effective formula for Bergman kernel of $\h_\gamma$ for every rational $\gamma > 1$, and (2) to use the formulas to exhibit new symmetries and locate zeros of the kernel in $\h_\gamma \times \h_\gamma$.

The question on the existence/non-existence of zeros of Bergman kernel (the Lu Qi-Keng problem) goes back several decades and was originally motivated by the global well-definedness of Bergman representative coordinates; see e.g., \cite{LQK-66}, and the survey \cite{Boas98}.

A domain $\Omega$ is said to be a Lu Qi-Keng domain when its Bergman kernel is non-vanishing.
Appropriately interpreted, the zero set of the Bergman kernel is an analytic variety on $\Omega \times \Omega$, and transformation law \eqref{E:Berg-biholo-trans-law} shows this variety is a biholomorphic invariant of $\Omega$.
Simple domains such as the ball and polydisc have non-vanishing Bergman kernel, and it is an interesting open problem to ask when the taking of quotients of these and other Lu-Qi Keng domains induces zeros in the quotient Bergman kernels.
This has been investigated in special cases (certain ball quotients \cite{BFS99}, symmetrized polydiscs \cite{EdiZwo05,NikZwo06}, the tetrablock \cite{Try13} and quotients of higher dimensional Cartan type IV domains \cite{GhoZwo25}), but a unified approach to this problem is only in its early stages.

Since the zero set of the Bergman kernel is a biholomorphic invariant, a preliminary step to the Lu Qi-Keng problem on the domain family $\{\h_\gamma : \gamma >0 \}$ would be to identify its biholomorphic equivalence classes.
In Theorem \ref{T:gen-Hartogs-biholo} and Corollary \ref{C:biholo-equiv-classes} below, we prove that for distinct $\gamma,\eta \ge 1$, the domains $\h_\gamma$ and $\h_\eta$ are {\em never} biholomorphic.
On the other hand, given $\gamma \in (0,1)$, there is a unique $\wt\gamma \ge 1$ such that $\h_\gamma$ is biholomorphic to $\h_{\wt\gamma}$, and therefore the problem can be reduced to understanding the situation for $\gamma \ge 1$.
Up to the time of writing of this paper, the Lu Qi-Keng problem on $\h_\gamma$ has stood as follows:
\begin{itemize}
\item[$(a)$] The Bergman kernel on $\h_1$ (the classical Hartogs triangle) is non-vanishing; this can be seen directly from formula \eqref{E:BergKernel-HartogsTri} below.
\item[$(b)$] The Bergman kernel on $\h_\gamma$ has zeros for $\gamma \in [2,\infty)$. 
This was shown first  for integers $\gamma \ge 2$ in \cite{Edh16}, then for all other $\gamma>2$ in \cite{EdhMcN16b}.
\item[$(c)$] It has remained open as to whether the Bergman kernel of $\h_\gamma$ has zeros for $\gamma \in (1,2)$.
\end{itemize} 

In \cite{EdhMcN16b}, when Edholm and McNeal found zeros of $\h_\gamma$ for $\gamma \ge 2$, it was also observed that the same approach fails to address $\gamma \in (1,2)$; see Section \ref{SS:previous-results} below for more details. 
The question has remained open since:
\begin{itemize}
\item[{\bf (Q1)}] {\em For which $\gamma \in (1,2)$ does the Bergman kernel have zeros?} \label{Q:which-gamma-in-(1,2)?}
\end{itemize}

A weaker version of (Q1) has also been raised; see \cite{Edh-Thesis}.
Since the topologically equivalent domains $\h_{\gamma_j}$ tend to $\h_1$ as $\gamma_j \searrow 1$, Ramadanov's theorem (more precisely, an extended version of the more classical result; see \cite{Kra06}) shows that the Bergman kernels $K_{\gamma_j}(\cdot,\cdot)$ of $\h_{\gamma_j}$ tend to the Bergman kernel $K_1(\cdot,\cdot)$ of $\h_1$ locally normally on $\h_1 \times \h_1$.
As $K_1$ is known to be non-vanishing, it is interesting to ask
\begin{itemize}
\item[{\bf (Q2)}] {\em Is there some $\epsilon > 0$ so that $K_\gamma(\cdot,\cdot)$ is non-vanishing on $\h_\gamma \times \h_\gamma$ for $\gamma \in [1,1+\epsilon)$?} \label{Q:wiggle-room-non-vanish}
\end{itemize}

This paper addresses both questions.
In Section \ref{S:zeros-of-the-Bergman-kernel} we exhibit infinitely many rational $\gamma \in (1,2)$ for which $K_\gamma(\cdot,\cdot)$ has zeros inside $\h_\gamma \times \h_\gamma$, marking the first progress made on (Q1).
These $\gamma$ values accumulate at 1, which answers (Q2) by proving no such $\epsilon$ can exist.

\subsection{An effective kernel formula}

While the existence of the Bergman kernel is guaranteed by abstract Hilbert space theory, concrete information about its properties as a function $K_\Omega : \Omega \times \Omega \to \C$ (e.g. its asymptotics near the boundary, whether it has zeros inside $\Omega \times \Omega$, and if so, properties of the zero set) requires hard analysis.

Our first main result is a new and effective formula for the Bergman kernel of $\h_{m/n}$, valid for any rational $\frac{m}{n} > 1$.
Previous work (see \cite{EdhMcN16b,CKMM20,Almugh23}) has expressed these kernels (which have been known since \cite{EdhMcN16b} to be rational functions) in one form or another, but until now a description of the numerator polynomial from which it is possible to study the zero set of the kernel when $\gamma \in (1,2)$ has remained elusive.
For comparison, these past results are recalled in Section \ref{SS:previous-results}.

\begin{theorem}\label{T:Bergman-kernel-formula}
Let $m,n \in \Z^+$ with $m>n$ and $\gcd(m,n)=1$. 
Given points $z=(z_1,z_2)$ and $w=(w_1,w_2)$ in $\h_{m/n}$, let $s = z_1\ol{w}_1$, $t = z_2\ol{w}_2$.
The Bergman kernel of $\h_{m/n}$ is
\begin{subequations}
\begin{equation}\label{E:Bergman-kernel-formula-intro}
K_{m/n}(z,w) = \frac{P_{m,n}(s,t)}{m \pi^2(1-t)^2(t^n-s^m)^2},
\end{equation}
where the numerator is a polynomial of degree $2m-1$, given explicitly by
\begin{equation}\label{E:Pmn-as-sum-of-5-terms}
P_{m,n}(s,t) = p_0(s,t) + p_1(s,t) + p_2(s,t) + p_3(s,t) + p_4(s,t),
\end{equation}
where
{\allowdisplaybreaks
\begin{align}
p_0(s,t) &= m^2 s^{m-1}t^n, \label{E:def-p0-intro}\\
p_1(s,t) &= \sum_{j=0}^{m-2} (j+1)(\kappa(j)+1) s^j t^{2n-L(j)}, \label{E:def-p1-intro} \\
p_2(s,t) &= \sum_{j=0}^{m-2} (j+1)(m-\kappa(j)-1) s^j t^{2n+1-L(j)}, \label{E:def-p2-intro} \\
p_3(s,t) &= \sum_{j=0}^{m-2} (m-j-1)(\kappa(j)+1) s^{j+m} t^{n-L(j)}, \label{E:def-p3-intro} \\
p_4(s,t) &= \sum_{j=0}^{m-2} (m-j-1)(m-\kappa(j)-1) s^{j+m} t^{n+1-L(j)}, \label{E:def-p4-intro}
\end{align}
}
and the expressions $L$ and $\kappa$ are given by
\begin{align}
L(j) = \left\lceil \frac{1+n(j+1)}{m} \right\rceil, \qquad 
\kappa(j) = m + n - 1 +nj - mL(j). \label{E:def-of-mn-ceiling-func-kappaj-intro}
\end{align}
\end{subequations}
\end{theorem}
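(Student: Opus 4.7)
The strategy is to compute $K_{m/n}$ directly from an orthogonal monomial basis of $A^2(\h_{m/n})$, and then to evaluate the resulting double series by decomposing the $z_1$-exponent by its residue modulo $m$. Since $\h_{m/n}$ is a Reinhardt domain that meets $\{z_1 = 0\}$ but avoids $\{z_2 = 0\}$, the $L^2$-integrable Laurent monomials $\{z_1^\alpha z_2^\beta : \alpha \in \Z_{\geq 0},\ \beta \in \Z\}$ form an orthogonal basis of $A^2(\h_{m/n})$. A direct polar-coordinate calculation identifies the admissible index set $\Sigma = \{(\alpha,\beta) \in \Z_{\geq 0} \times \Z : n(\alpha+1) + m(\beta+1) > 0\}$ and yields the norms $\|z_1^\alpha z_2^\beta\|^2 = \pi^2 m/[(\alpha+1)(n(\alpha+1) + m(\beta+1))]$. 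With $s = z_1\bar{w}_1$ and $t = z_2 \bar{w}_2$, the kernel expansion becomes
\[
K_{m/n}(z,w) = \frac{1}{\pi^2 m}\sum_{(\alpha,\beta)\in\Sigma}(\alpha+1)\bigl(n(\alpha+1) + m(\beta+1)\bigr)\,s^\alpha t^\beta.
\]

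Next, I would split $\alpha$ by residue modulo $m$: write $\alpha = qm + j$ with $q \geq 0$ and $j \in \{0,1,\ldots,m-1\}$. For $j \in \{0,\ldots,m-2\}$ the integrability condition reads $\beta \geq -qn - L(j)$, and substituting $\beta = -qn - L(j) + \ell$ with $\ell \geq 0$ collapses $n(\alpha+1) + m(\beta+1)$ to $\kappa(j) + 1 + m\ell$; this is precisely how $L$ and $\kappa$ arise. Coprimality $\gcd(m,n) = 1$ guarantees that for $j < m-1$ the residues $n(j+1) \bmod m$ are distinct and nonzero, giving $\kappa(j) \in \{0,\ldots,m-2\}$. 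For $j = m-1$, the divisibility $m \mid (\alpha+1)$ forces the sharper bound $\beta \geq -n(q+1)$, and the summand simplifies to $m^2(q+1)(\ell+1)$ after the analogous substitution.

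For each fixed $j$ the double sum now factors as a product of two arithmetic-geometric series,
\[
\sum_{q \geq 0}(qm + j + 1)\bigl(s^m/t^n\bigr)^q \quad \text{and} \quad \sum_{\ell \geq 0}(m\ell + \kappa(j) + 1)\,t^\ell,
\]
both summable in closed form. Using $1 - s^m/t^n = (t^n - s^m)/t^n$, the first factor contributes a denominator $(t^n - s^m)^2$ and the second contributes $(1-t)^2$. Multiplying the closed forms and collecting by $j$ produces, for each $j \in \{0,\ldots,m-2\}$, exactly the four bilinear monomials appearing as the $j$-th summands of $p_1, p_2, p_3, p_4$ in \eqref{E:def-p1-intro}--\eqref{E:def-p4-intro}, while the special $j = m-1$ case contributes the single monomial $p_0 = m^2 s^{m-1} t^n$ of \eqref{E:def-p0-intro}. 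Summing over $j$ and dividing by $\pi^2 m$ yields \eqref{E:Bergman-kernel-formula-intro}.

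The main obstacle is the careful bookkeeping needed to align the substitution $\beta = -qn - L(j) + \ell$ so that both one-variable sums acquire the advertised $(1-t)^{-2}$ and $(t^n - s^m)^{-2}$ shapes, and to verify that the special case $j = m-1$ produces a single monomial rather than a full block of four. The functions $L$ and $\kappa$ encode the Diophantine data of the integrability cutoff, and their appearance is the reason the numerator $P_{m,n}$ has the specific sparse structure \eqref{E:Pmn-as-sum-of-5-terms}.
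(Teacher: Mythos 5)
Your proposal is correct, and I verified the key computations: the norm $\norm{z_1^\alpha z_2^\beta}^2 = \pi^2 m/[(\alpha+1)(n(\alpha+1)+m(\beta+1))]$, the identification of the minimal $\beta$ for fixed $\alpha = qm+j$ as $-qn - L(j)$ (using that $n(j+1) \not\equiv 0 \bmod m$ for $j \le m-2$), the collapse of $n(\alpha+1)+m(\beta+1)$ to $\kappa(j)+1+m\ell$, and the closed forms $\sum_{q\ge 0}(qm+j+1)x^q = [(j+1)+(m-j-1)x]/(1-x)^2$ with $x = s^m/t^n$ and $\sum_{\ell \ge 0}(m\ell + \kappa(j)+1)t^\ell = [(\kappa(j)+1)+(m-\kappa(j)-1)t]/(1-t)^2$, whose product reproduces exactly the four $j$-th summands of $p_1,\dots,p_4$ and whose degenerate $j=m-1$ case gives $p_0$. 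However, your route is genuinely different from the paper's. The paper does not compute the kernel from scratch: it takes as its starting point the already-known formula of Proposition \ref{T:Almugh-formula} (due to Chakrabarti et al.~and Almughrabi, ultimately derived from Bell's transformation law for the $m$-sheeted proper cover $\Phi_{m,n}:\D\times\D^*\to\h_{m/n}$), whose numerator is $\sum_\beta D_m(\beta_1)D_m(m\beta_2+n\beta_1+m+n-1-2mn)s^{\beta_1}t^{\beta_2}$, and then proves the theorem by determining via modular arithmetic exactly which pairs $(\beta_1,\beta_2)$ give a nonzero coefficient (Proposition \ref{P:Finding-effective-coefficients}) and evaluating $D_m$ there, using the identities of Lemma \ref{L:props-of-L-and-kappa} to re-index. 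The paper's approach buys the observation that only $4m-3$ of the $(2m-1)(2n+1)$ a priori possible coefficients survive, and it keeps the connection to the quotient structure visible; your approach is self-contained and more elementary, deriving the same $L$ and $\kappa$ directly as the $L^2$-integrability cutoff and the reduced residue $n(j+1)\bmod m$, at the cost of redoing the norm computation and justifying the series manipulations (which is fine, since $|s^m/t^n|<1$ and $|t|<1$ on $\h_{m/n}\times\h_{m/n}$ and the monomial expansion converges locally normally). Either argument establishes the theorem.
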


Theorem \ref{T:Bergman-kernel-formula} is proved in Section \ref{S:effective-formula}, where modular arithmetic is used to solve several linear Diophantine equations depending on $m$ and $n$, thus determining the complete set of monomials appearing in the numerator polynomial (Proposition \ref{P:Finding-effective-coefficients}).
This information is then combined with functional identities for $L$ and $\kappa$ (Lemma \ref{L:props-of-L-and-kappa}) to produce the kernel.

The new formulas allow for effective analysis and the discovery of new symmetries by improving (in fact optimizing) the presentation of the numerator polynomial.
For example, the most recent of the previous formulas for $K_{m/n}(z,w)$  (\cite[Proposition 2]{Almugh23}; see Theorem \ref{T:Almugh-formula} below) requires the computation of $(2m-1)(2n+1)$ monomial coefficients in the numerator.
Theorem \ref{T:Bergman-kernel-formula}, however, shows that all but exactly $4m-3$ of these coefficients are equal to zero.
For large $n$ this is an enormous simplification on both a computational and a theoretical level, one that leads to the discovery of new symmetries that can be used to study the zero set of the kernel.


\subsection{Monomial maps; symmetries and zeros of the Bergman kernel}\label{SS:Monomial-maps}

Given a relatively prime pair $m,n \in \Z^+$, we recall the point of view taken in \cite{Edh16,bcem,CKMM20,Almugh23} in which $\h_{m/n}$ is realized as a quotient domain.
The image of $\D \times \D^*$ under the monomial map 
\begin{equation*}
\Phi_{m,n}(z_1,z_2) = (z_1 z_2^n,z_2^m)
\end{equation*}
can be identified with a rational Hartogs triangle
\begin{equation*}
\Phi_{m,n}(\D \times \D^*) = \h_{m/n}.
\end{equation*}

More precisely, $\Phi_{m,n}: \D \times \D^* \to \h_{m/n}$ is an $m$-sheeted proper holomorphic branched covering.
Using the language of \cite{bcem,ChakEdh24}, $\Phi_{m,n}$ is said to be a proper holomorphic map of quotient type with respect to a group $\Gamma_{m,n} \subset \mathrm{Aut}(\D \times \D^*)$, the deck transformation group of $\Phi_{m,n}$.
In this case $\Gamma_{m,n}$ is cyclic of order $m$.
In light of these considerations, the complexity of the formulas in Theorem \ref{T:Bergman-kernel-formula} demonstrates that an intricate Bergman theory can arise on quotient domains, even when the starting domains and maps are quite simple.
In our setting $\Phi_{m,n}$ forces arithmetic properties of the rational index $\gamma = \frac{m}{n}$ into the Bergman kernel via Bell's transformation rule for proper maps (see \cite{belltransactions}).

\begin{figure}
\centering
\begin{tikzpicture}[x={(0,1cm)},y={(1 cm,0)}]
\draw[-{latex}, thick] (0,0) -- (3.5,0) node[anchor=east] {$|t|$};
\draw[-{latex}, thick] (0,0) -- (0,3.5) node[anchor=west] {$|s|$};
\shadedraw [gray, domain=0:3] (0,0) -- plot  (1/3*\x*\x,\x) -- (3,0);
\draw[-, very thick,dashed] (0,0) -- (1.8,1.8) node[anchor=east] {$\{s = t\}\,\,$};
\draw[-, very thick,dashed] (1.8,1.8) -- (3,3);
\end{tikzpicture} 
\caption{The punctured disc $\Delta^*$ lying obliquely inside $\h_\gamma$.}
\label{Fig:punctured-disc-Delta*}
\end{figure}
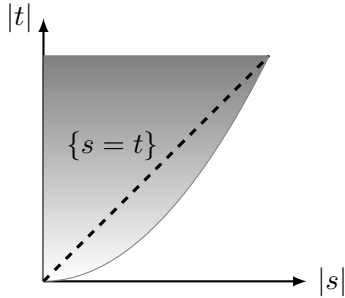

\subsubsection{Palindromic polynomials}\label{SSS:palidromic-polys}
The numerator polynomial of the Bergman kernel of $\h_{m/n}$ satisfies an interesting symmetry that assists our study of the Lu Qi-Keng problem.

Given points $z,w \in \h_\gamma$, we continue the notation of Theorem \ref{T:Bergman-kernel-formula} and write $s = z_1\ol{w}_1$, $t = z_2\ol{w}_2$.
It is easy to check via Corollary \ref{C:comm-diag} and Proposition \ref{P:gen-Hartogs-are-their-own-Reinhardt-powers} that the point $(s,t)$ is also contained in $\h_\gamma$, and that every point of $\h_\gamma$ can be realized in this way.

Now think of $(s,t)$ as coordinates on $\C^2$, and consider the complex line $\{s = t \}$.
For $\gamma>1$, this line intersects the interior of $\h_\gamma = \{|s|^\gamma < |t| < 1 \}$.
The intersection is a punctured disc $\Delta^*$ lying obliquely inside of $\h_\gamma$ (see Figure \ref{Fig:punctured-disc-Delta*}):
\begin{align*}
\Delta^* 
= \{(s,s) \in \h_\gamma : s \in \D^* \}.
\end{align*}

Now 
restrict the numerator of \eqref{E:Bergman-kernel-formula-intro} to the line $\{s=t\}$.
The one variable polynomial $s \mapsto P_{m,n}(s,s)$ can be shown to have a root of order $2n-1$ at the origin, which suggests the following definition 
\begin{align*}
Q_{m,n}(s) := s^{1-2n} P_{m,n}(s,s).
\end{align*}
Thus, roots of the one variable polynomial $Q_{m,n}(s)$ inside $\D^* \subset \C$ are in one-to-one correspondence with the roots of $P_{m,n}(s,t)$ restricted to $\Delta^* \subset \h_\gamma$.

Understanding these roots is made easier due to the following unexpected symmetry in the coefficients of $Q_{m,n}$:

\begin{theorem}\label{T:Q-palindromic-intro}
For any pair of relatively prime integers $m>n$, $Q_{m,n}$ is a palindromic polynomial of degree $2m-2n$, i.e.,
\begin{equation}\label{E:Q-palindromic-intro}
Q_{m,n}(s) = s^{2m-2n} Q_{m,n}(s^{-1}).
\end{equation}
\end{theorem}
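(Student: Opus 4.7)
The plan is to substitute $t = s$ into each of the five summands $p_0,\ldots,p_4$ of Theorem \ref{T:Bergman-kernel-formula}, multiply by $s^{1-2n}$, and then verify palindromy directly on the resulting univariate polynomial. Everything will be driven by two functional identities for the data $L$ and $\kappa$ defined in \eqref{E:def-of-mn-ceiling-func-kappaj-intro}:
\begin{equation*}
L(j) + L(m-2-j) = n + 1, \qquad \kappa(j) + \kappa(m-2-j) = m - 2 \qquad (0 \le j \le m-2).
\end{equation*}
The first identity is a short ceiling-function computation: writing $n(j+1) = qm + r$ with $1 \le r \le m-1$ (permissible because $\gcd(m,n) = 1$ and $1 \le j+1 \le m-1$), one checks that $L(j) = q+1$ and $L(m-2-j) = n-q$. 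The $\kappa$-identity is then a one-line consequence of this and the definition $\kappa(j) = m+n-1+nj - mL(j)$. I expect both identities to be furnished by Lemma \ref{L:props-of-L-and-kappa}.

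With these identities in hand, the rest is a bookkeeping check carried out under the involution $j \mapsto m-2-j$. The $p_0$-contribution $s^{1-2n}p_0(s,s) = m^2 s^{m-n}$ sits exactly at the midpoint of $[0,2m-2n]$ and is its own palindromic partner. For the $p_1$-term at index $j$ paired with the $p_4$-term at index $m-2-j$, the exponents in $s$ are respectively $j+1-L(j)$ and $2m-j-n-L(m-2-j)$, whose sum is $2m-2n$ by the $L$-identity; the coefficients are $(j+1)(\kappa(j)+1)$ and $(j+1)(m-\kappa(m-2-j)-1)$, which become equal after substituting the $\kappa$-identity. An entirely analogous pairing sends $p_2$ at $j$ to $p_3$ at $m-2-j$, yielding the common coefficient $(j+1)(m-\kappa(j)-1)$ and exponents again summing to $2m-2n$. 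Summed over $j \in \{0,\ldots,m-2\}$, this shows that the coefficient of $s^k$ in $Q_{m,n}$ equals the coefficient of $s^{2m-2n-k}$, which is \eqref{E:Q-palindromic-intro}.

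Finally, I would check that $Q_{m,n}$ has degree exactly $2m-2n$, i.e.\ that the palindromy is not trivially padded by zero leading (and trailing) coefficients. Using $L(0) = \lceil (1+n)/m\rceil = 1$ (valid since $m \ge n+1$), the minimum exponent $0$ is realized by the $j=0$ summand of $p_1$ with nonzero coefficient, so by the just-established palindromy the maximum exponent $2m-2n$ is realized by the $j = m-2$ summand of $p_4$. The principal obstacle is the verification of the two arithmetic identities for $L$ and $\kappa$; once those are available, the proof is pure labelling of the five monomial families into two palindromic pairs and a single fixed middle term.
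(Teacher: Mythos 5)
Your argument is correct and is essentially the paper's own proof: the paper also restricts to $\{s=t\}$, invokes Lemma \ref{L:props-of-L-and-kappa} parts (3) and (4), and pairs the five families exactly as you do (its identities $s^{2m-2n}q_1(s^{-1})=q_4(s)$ and $s^{2m-2n}q_2(s^{-1})=q_3(s)$ are your $p_1\leftrightarrow p_4$ and $p_2\leftrightarrow p_3$ pairings under $j\mapsto m-2-j$, with $p_0$ self-paired). The only point to spell out is that every exponent $j-L(j)+1$ is nonnegative (equivalently $L(j)\le j+1$), so that $Q_{m,n}$ really is a polynomial; the paper does this by showing $j\mapsto j-L(j)+1$ is non-decreasing with value $0$ at $j=0$, and since all coefficients are positive your degree claim then follows without any cancellation worry.
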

Theorem \ref{T:Q-palindromic-intro} is obtained as part of Proposition \ref{P:Qmn-palindromic}, which establishes additional identities analogous to \eqref{E:Q-palindromic-intro} on pieces of the full polynomial.
The palindromic property of $Q_{m,n}$ allows for indirect study of roots inside $\D^*$ by examining the polynomial's behavior on the unit circle.

This circle of ideas pays off in Section \ref{S:zeros-of-the-Bergman-kernel} where it is shown that
$Q_{m,n}$ is non-vanishing on the unit circle for all relatively prime pairs $m>n$ such that $m-n = 1$ or $2$:

\begin{theorem}\label{T:half-zeros-inside-intro}
Let $m>n$ be a pair of relatively prime integers.
If $m-n = 1$ or $2$, then $Q_{m,n}$ has exactly half of its roots inside the unit disc.
Consequently, the Bergman kernel of $\h_{m/n}$ has zeros inside $\h_{m/n} \times \h_{m/n}$.
\end{theorem}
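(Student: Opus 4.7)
The plan is to reduce the question to non-vanishing of a real trigonometric polynomial on $[-1,1]$, which can then be verified by direct computation in each of the two regimes.

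First, I would exploit the palindromic symmetry from Theorem \ref{T:Q-palindromic-intro}. Since every coefficient in $p_0, \ldots, p_4$ of \eqref{E:Pmn-as-sum-of-5-terms} is a product of positive integers, $Q_{m,n}$ has positive integer coefficients; in particular it has a nonzero constant term, so by the identity $s^{2d}Q_{m,n}(s^{-1}) = Q_{m,n}(s)$ with $d = m-n$, its $2d$ roots pair up as $\{r, 1/r\}$ with $r \ne 0$. If no such root lies on the unit circle, then each pair contributes exactly one root to $\D^*$, giving the ``half inside'' conclusion. It therefore suffices to prove $Q_{m,n}(e^{i\theta}) \ne 0$ for all $\theta \in \R$ when $m - n \in \{1, 2\}$.

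Next, I would extract $Q_{m,n}$ in closed form from Theorem \ref{T:Bergman-kernel-formula}, aided by Lemma \ref{L:props-of-L-and-kappa}. When $m - n = 1$, the polynomial is a palindromic quadratic $A s^2 + B s + A$, and $e^{-i\theta}Q_{m,n}(e^{i\theta}) = 2A\cos\theta + B$; the non-vanishing reduces to the inequality $B > 2A$, which I would prove by identifying which terms in each $p_i(s,s)$ contribute to $A$ versus $B$ and bounding them via the combinatorial identities for $\kappa(j)$. When $m - n = 2$, $Q_{m,n}$ is a palindromic quartic $A s^4 + B s^3 + C s^2 + B s + A$, and the substitution $u = \cos\theta \in [-1,1]$ gives $e^{-2i\theta}Q_{m,n}(e^{i\theta}) = 4A u^2 + 2B u + (C - 2A)$; I would show this quadratic has no root in $[-1,1]$ by either a negative-discriminant argument or by exhibiting it as a perfect square nonvanishing on $[-1,1]$. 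The small cases $Q_{3,1}(s) = s^4 + 6s^3 + 13 s^2 + 6s + 1$ (with quadratic $4u^2 + 12u + 11$, discriminant $144 - 176 < 0$) and $Q_{5,3}(s) = 5(s^2+3s+1)^2$ (whose quadratic is $(2u+3)^2$) suggest that both phenomena occur depending on $n \pmod m$.

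Finally, once a root $s_0 \in \D^*$ of $Q_{m,n}$ is produced, I would realize a Bergman kernel zero via the observation preceding \eqref{E:def-of-deisc-Delta*}: every point of $\h_{m/n}$, and in particular $(s_0, s_0) \in \Delta^*$, can be written as $(z_1 \ol w_1, z_2 \ol w_2)$ for some $z, w \in \h_{m/n}$. For such $z, w$ the denominator of \eqref{E:Bergman-kernel-formula-intro} is nonzero (since $(s_0,s_0) \in \h_{m/n}$ ensures $t_0^n \ne s_0^m$), while $P_{m,n}(s_0, s_0) = s_0^{2n-1}Q_{m,n}(s_0) = 0$, so $K_{m/n}(z, w) = 0$. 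The main obstacle is the explicit extraction of $A, B, C$ from the sums in \eqref{E:def-p1-intro}--\eqref{E:def-p4-intro}: because $L(j)$ and $\kappa(j)$ are piecewise-linear in $j$ with jumps determined by divisibility modulo $m$, a careful case analysis on $j \bmod m$ is needed before the required sign information on the quadratic in $\cos\theta$ becomes visible.
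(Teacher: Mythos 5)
Your strategy coincides with the paper's: use the palindromic pairing of roots (Proposition \ref{P:Palindromic-roots}) to reduce to non-vanishing of $Q_{m,n}$ on the unit circle, compute $Q_{m,n}$ in closed form for each of the two one-parameter families via Lemma \ref{L:props-of-L-and-kappa}, convert to a real quadratic in $u=\cos\theta$, and finally transfer a root $s_0\in\D^*$ to a Bergman kernel zero through the surjectivity of $\psi$ onto $\h_{m/n}^{(2)}=\h_{m/n}$ and the non-vanishing of the denominator on $\Delta^*$. Your $m-n=1$ argument (the inequality $B>2A$, which is $\alpha_1(\ell)>2\alpha_0(\ell)$ in the paper's notation) is exactly what the paper does, and your closing step matches Corollary \ref{C:LQK-k=1}.

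The one step that would fail as stated is your treatment of $m-n=2$. Writing $(m,n)=(2\ell+1,2\ell-1)$ and normalizing by the leading coefficient, the relevant quadratic is $G_\ell(u)=4u^2+12u+8+\tfrac{6}{\ell^2+\ell}$, with discriminant $16-\tfrac{96}{\ell^2+\ell}$. This is negative only for $\ell=1$ (your $Q_{3,1}$) and zero only for $\ell=2$ (your $Q_{5,3}$); for every $\ell\ge 3$ --- that is, for all but two members of the family, including $\tfrac{7}{5},\tfrac{9}{7},\tfrac{11}{9},\dots$ --- the quadratic has two \emph{distinct real} roots, so neither a negative-discriminant argument nor a perfect-square factorization is available, and your proposed dichotomy does not cover the bulk of the cases. (It is also governed by $\ell$, not by $n\bmod m$.) The missing step is easy but must be supplied: one checks that the two real roots lie outside $[-1,1]$. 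The paper does this by computing them explicitly as $-\tfrac{3}{2}\pm\tfrac{1}{2}\sqrt{9-\beta(\ell)}$ with $\beta(\ell)=8+\tfrac{6}{\ell^2+\ell}\in(8,9]$, which places them in $(-2,-1)$; equivalently, note $G_\ell(-1)=\tfrac{6}{\ell^2+\ell}>0$ and $G_\ell$ is increasing on $[-\tfrac{3}{2},\infty)\supset[-1,1]$, so $G_\ell>0$ there. With that repair your argument closes and agrees with the paper's.
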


Proofs of different pieces of this theorem are found in Sections \ref{SS:k=1} and \ref{SS:k=2}.
As a consequence of this result, we obtain infinitely many rationals accumulating at $\gamma = 1$ such that the Bergman kernel has zeros, resolving (Q2) above with a negative answer.

\begin{corollary}
Despite the fact that the Bergman kernel $K_1(z,w)$ of $\h_1$ is non-vanishing on $\h_1 \times \h_1$, there is no $\epsilon > 0$ for which the Bergman kernels $K_\gamma(z,w)$ of $\h_\gamma$ are non-vanishing on $\h_\gamma \times \h_\gamma$ for $\gamma \in [1,1+\epsilon)$.
\end{corollary}

\subsection{Past results}\label{SS:previous-results}

The Bergman kernel of $\h_{\gamma}$ was first computed for $\gamma \in \Z^+$ by Edholm in \cite{Edh16}, then for $\gamma = \frac{m}{n} \in \Q^+$ by Edholm and McNeal in \cite{EdhMcN16b}, where it was expressed as a sum of $m$ subkernels, each being an explicit rational function.
Sharp estimates on these subkernels imply a sharp estimate on the full Bergman kernel, which can then be used to characterize the $L^p$-mapping properties of the Bergman projection based on the rationality/irrationality of $\gamma$; see \cite[Theorem 1.1 and 1.2]{EdhMcN16b}.

\subsubsection{Known results on the vanishing set}
Because the $L^p$ results in \cite{EdhMcN16b} were deducible directly from the kernel estimates, little attempt was made to further analyze the numerator of the Bergman kernel, or to re-write it in a way that could make symmetries like Theorem \ref{T:Q-palindromic-intro} easier to detect.
The only result in this direction was \cite[Theorem 3.3]{EdhMcN16b}, where it was noted that all but one of the subkernels vanish identically on the variety $\{s=0\}$, meaning the full Bergman kernel restricted to this variety is given by a particularly simple formula:
\begin{equation*}
K_\gamma((0,z_2),(0,w_2)) = \frac{1+(\gamma-1)z_2\ol{w}_2}{\gamma \pi^2 z_2\ol{w}_2(1-z_2\ol{w}_2)^2}.
\end{equation*}
(This formula is valid for all $\gamma>0$, including irrationals.)

When $\gamma>2$, it is possible by inspection to find $z_2,w_2$ with $0< |z_2|, |w_2| < 1$ for which the Bergman kernel vanishes. 
Here, it is crucial that $z_2,w_2 \in \D^*$ so that both $(0,z_2),(0,w_2) \in \h_\gamma$.
On the other hand, for $0 < \gamma \le 2$, it is not possible to find such $z_2,w_2$, meaning that those Bergman kernels are non-vanishing on $\{s=0\}$.
The $\gamma = 2$ case had previously been addressed by Edholm in \cite[Corollary 4.2]{Edh16}, where an explicit zero of $K_2$ inside $\h_2 \times \h_2$ (but away from $\{s=0\}$) was produced.

This approach of looking at the kernel restricted to a variety inspired the restriction of the Bergman kernel to $\{s=t\}$ considered in the present paper.

\subsubsection{The Bergman kernel in terms of the function $D_m$}
A different approach to the Bergman kernels of $\h_{m/n}$ comes from Chakrabarti et al. in \cite{CKMM20}, where they obtained formulas as special cases of calculations on so-called Signature 1 domains in $\C^n$.
In this work, proper holomorphic maps are used to cover the domains under consideration by simpler product domains for which the Bergman kernel was already known.
Bell's transformation law (see \cite{belltransactions}) is then applied and the kernel on the target is expressed as a linear combination of functions closely related to the Bergman kernel on the source. 

The difficulty with this approach is parsing this linear combination into a digestible formula that can then be used to effectively study some other problem (such as the mapping properties of the Bergman projection or, especially, locating zeros of the Bergman kernel).

One of the main observations of \cite{CKMM20} is that the Bergman kernels of any Signature 1 domain are rational functions whose numerators can be succinctly written using the following function defined on the integers:
\begin{align}\label{E:Dbasic_def}
D_m(\beta) 
= \begin{cases}
\beta + 1, & 0 \leq \beta \leq m - 1, \\
2m - 1 - \beta, & m \leq \beta \leq 2m - 2, \\
0, & \mathrm{otherwise}.
\end{cases}
\end{align}

This function arises out of combinatorial considerations as it describes the coefficients on the right hand side of the following polynomial equation:
\begin{equation*}
\left( \frac{1-s^m}{1-s} \right)^2 = \sum_{\beta=0}^{2m-2} D_m(\beta) s^\beta.
\end{equation*}

Following the initial work (\cite{CKMM20}) expressing the Bergman kernels on Signature 1 domains in terms of $D_m$, the formulas were substantially simplified in the two dimensional case in a recent paper by Almughrabi \cite{Almugh23}:

\begin{proposition}[\cite{Almugh23}, Proposition 2]\label{T:Almugh-formula}
Let $m,n \in \Z^+$ with $m > n$ and $\gcd(m,n)=1$.
Letting $s = z_1\ol{w}_1$, $t = z_2\ol{w}_2$, the Bergman kernel of $\h_{m/n}$ is given by 
\begin{equation}\label{E:Almugh-formula}
K_{m/n}(z,w) = \sum_{\beta \in \Z^2} \frac{D_m(\beta_1) D_m(m\beta_2 + n\beta_1+m+n-1-2mn)s^{\beta_1}t^{\beta_2}}{\pi^2 m (t^n - s^m)^2(1-t)^2}.
\end{equation}
In this formula, the terms in the numerator polynomial are only possibly non-zero when 
\begin{equation}\label{E:two-ineqs}
0\le\beta_1\le 2m-2, \qquad \mathrm{and} \qquad 0\le\beta_2\le 2n.
\end{equation}
\end{proposition}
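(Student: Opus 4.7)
The plan is to apply the proper-map transformation law for Bergman kernels (Bell's formula, see \cite{belltransactions}) to the monomial cover $\Phi_{m,n}\colon \D \times \D^* \to \h_{m/n}$ of Section \ref{SS:Monomial-maps}. This is an $m$-sheeted quotient whose cyclic deck group $\Gamma_{m,n}$ is generated by $\sigma(z_1,z_2) = (\zeta^{-n}z_1, \zeta z_2)$, where $\zeta = e^{2\pi i/m}$. A direct calculation gives $\det\Phi_{m,n}'(z_1,z_2) = m z_2^{m+n-1}$ and $\det\sigma_k'(w) = \zeta^{k(1-n)}$. Since the puncture $\{z_2 = 0\}$ has measure zero, $K_{\D\times\D^*}$ is the standard product kernel $\pi^{-2}(1-z_1\ol w_1)^{-2}(1-z_2\ol w_2)^{-2}$.

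Setting $a = z_1\ol w_1$ and $b = z_2\ol w_2$ (so that in the target coordinates $S = ab^n$ and $T = b^m$), the transformation law produces
$$m^2\, b^{m+n-1}\, K_{\h_{m/n}}\!\bigl(\Phi_{m,n}(z),\, \Phi_{m,n}(w)\bigr) = \frac{1}{\pi^2}\sum_{k=0}^{m-1}\frac{\zeta^{k(n-1)}}{(1-\zeta^{nk}a)^2(1-\zeta^{-k}b)^2}.$$
The next step is to expand each factor into geometric power series in $a$ and $b$ and evaluate the sum over $k$ using the finite Fourier identity $\sum_{k=0}^{m-1}\zeta^{kj} = m\cdot\mathbf{1}_{m\mid j}$. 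This acts as a filter: only monomials $a^p b^q$ with $np - q + n - 1 \equiv 0 \pmod{m}$ survive, each weighted by $m(p+1)(q+1)$. Because $\gcd(m,n) = 1$, the surviving exponents are exactly those that, after absorbing the Jacobian factor $b^{m+n-1}$, rewrite as $S^{\beta_1}T^{\beta_2}$ for integers $\beta_1 \ge 0$ and appropriate $\beta_2$.

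To convert the resulting power series into the rational closed form \eqref{E:Almugh-formula}, I would multiply through by $(1-T)^2(T^n - S^m)^2 = b^{2mn}(1-a^m)^2(1-b^m)^2$, which installs the desired denominator. The polynomial identity $\bigl(\tfrac{1-u^m}{1-u}\bigr)^{\!2} = \sum_\beta D_m(\beta)u^\beta$, applied once with $u = a$ and once with $u = b$, collapses the residual $(1-a)^{-2}(1-b)^{-2}$ expansion into a product of two $D_m$ sums. Consolidating the three exponent shifts, namely $m+n-1$ from the Jacobian, $-2mn$ from the denominator normalization, and the residual congruence linking $\beta_1$ and $\beta_2$, should yield precisely $D_m(\beta_1)\,D_m(m\beta_2 + n\beta_1 + m + n - 1 - 2mn)$ as the coefficient of $S^{\beta_1}T^{\beta_2}$. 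The inequalities in \eqref{E:two-ineqs} then follow from the support $[0,2m-2]$ of $D_m$: the first bound is direct, and the second comes from forcing the argument of the second $D_m$ to lie in the same interval, which translates into $0 \le \beta_2 \le 2n$.

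The main obstacle is the bookkeeping in the middle step. The Fourier filter couples the two coordinates through the shared index $k$, so the double sum does not factor as a product of single sums; the off-diagonal coefficient $m\beta_2 + n\beta_1$ inside the second $D_m$ argument is a direct consequence of this coupling. Verifying that all the exponent shifts combine to produce exactly the constant $m+n-1-2mn$ is a matter of careful arithmetic rather than deep insight, invoking only the Fourier identity, the polynomial identity for $D_m$, and the coprimality of $m$ and $n$.
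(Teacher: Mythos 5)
The paper does not actually prove this proposition---it imports it verbatim from \cite{Almugh23} (which builds on \cite{CKMM20})---so there is no internal proof to compare against; your derivation via Bell's transformation law for the $m$-sheeted cover $\Phi_{m,n}$ is exactly the method of those cited works, and the outline checks out: the Jacobian $m z_2^{m+n-1}$, the deck transformations $\sigma_k(z)=(\zeta^{-nk}z_1,\zeta^k z_2)$ with $\overline{\det\sigma_k'}=\zeta^{k(n-1)}$, the root-of-unity filter $m\mid np-q+n-1$, the factorization $(1-T)^2(T^n-S^m)^2=b^{2mn}(1-a^m)^2(1-b^m)^2$, and the exponent shift producing $m\beta_2+n\beta_1+m+n-1-2mn$ are all correct. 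The only step left implicit is the final bookkeeping for \eqref{E:two-ineqs} (deducing $0\le\beta_2\le 2n$ requires combining the support condition on the second $D_m$ argument with the already-established bound $0\le\beta_1\le 2m-2$ and $n<m$), which is routine arithmetic as you say.
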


The presence of $D_m$ in the numerator polynomial provides some insight into the algebra and combinatorics injected into the problem by the covering maps $\Phi_{m,n} : \D \times \D^* \to \h_{m,n}$.
Unfortunately, leaving the numerator in the form of \eqref{E:Almugh-formula} obfuscates basic information about the polynomial: its degree, which monomials appear in the sum with non-zero coefficients, and what these coefficients are.

The kernel formulas given in Theorem \ref{T:Bergman-kernel-formula} entirely resolve these uncertainties.
The conditions on $(\beta_1,\beta_2)$ in \eqref{E:two-ineqs} requires us to compute $(2m-1)(2n+1)$ polynomial coefficients, while polynomial formulas \eqref{E:def-p0-intro} through \eqref{E:def-p4-intro} show that all but exactly $4m-3$ of these coefficients are non-zero.

\subsection{Acknowledgments}
The authors thank Bernhard Lamel for many discussions on the intersection of discrete mathematics and Bergman theory, as well as the anonymous referee for providing positive feedback and useful suggestions.

This paper grew out of an undergraduate research project that Mathew did with Edholm while they were both at the University of Michigan in 2019-20.
After a COVID-induced, years long hiatus taking them to opposite corners of the globe, they are happy to have returned to the project to complete many partial results obtained years earlier.


\section{Preliminaries}\label{S:Bergman-kernel-Reinhardt-domains}

\subsection{Reinhardt domains}
A domain $\Omega \subset \C^n$ is called Reinhardt (with respect to the origin) when it admits a natural $n$-torus action allowing for independent rotation in each coordinate. 
In other words,
\begin{equation*}
(z_1,z_2,\cdots,z_n) \in \Omega \quad \Longleftrightarrow \quad (e^{i \theta_1} z_1, e^{i \theta_2} z_2,\cdots, e^{i \theta_n} z_n) \in \Omega
\end{equation*}
for any choice of $\theta_1,\dots,\theta_n \in \R$. 
In $\C^1$ the only Reinhardt domains are (1) discs, (2) the full plane, (3) annuli, (4) the plane with a closed disc removed, (5) once punctured discs with the center removed, and (6) the once punctured plane.
In these settings, each holomorphic function is represented by a {\em globally valid} Laurent expansion about the origin converging normally on compact subsets of the domain. 
Each of the above domain families has its own distinct flavor of holomorphic function theory and this distinction can be drastic, as the contrast between the classical theories of entire functions and holomorphic functions on the unit disc indicates.
 
Holomorphic functions on Reinhardt domains in $\C^n$ also admit globally valid Laurent expansions, and as before, two distinct Reinhardt domains can give rise to drastically different holomorphic function theories; e.g., function theory on the unit ball (see, e.g., \cite{Rudin-BallBook}) versus the unit polydisc (see, e.g., \cite{Rudin-PolydiscBook}).
Properties of the Bergman kernel (its boundary asymptotics, or a description of its zero set when it exists), and Bergman projection (its behavior acting in $L^p$, Sobolev, Hölder classes) give a way to gauge this.

A domain $\Omega \subset \C^n$ is called a domain of holomorphy when it is the domain of existence for some holomorphic function that cannot be analytically continued past any boundary point of $\Omega$.
The solution to the famous Levi problem (see \cite{Siu_LeviProblemSurvey,Krantz_scv_book}) characterizes the domains of holomorphy as the so-called pseudoconvex ones. 
Several equivalent geometric descriptions of pseudoconvexity exist in the general setting, and for Reinhardt domains it is particularly easy to characterize. 
We need two definitions:

Given a Reinhardt domain $\Omega \subset \C^n$, its {\em logarithmic shadow} is defined to be the set
\begin{equation*}
\mathrm{Log}(\Omega) := \{ (\log|z_1|,\cdots,\log|z_n|) \in \R^n : z \in \Omega \,\, \mathrm{with\,\,each} \,\, z_j \neq 0 \}.
\end{equation*}

A domain is called {\em weakly relatively complete} if, for each $j$ with $\Omega \cap \{z_j = 0\} \neq \varnothing$, that $(z',0,z'') \in \Omega$ whenever there exists some $(z',z_j,z'') \in \Omega \subset \C^{j-1} \times \C \times \C^{n-j}$.

\begin{proposition}[\cite{JarPflBook08}, Proposition 1.11.13]\label{P:Pseudoconvex-Reinhardt}
A Reinhardt domain $\Omega \subset \C^n$ is pseudoconvex if and only if $\mathrm{Log}(\Omega) \subset \R^n$ is a convex set and $\Omega$ is weakly relatively complete.
\end{proposition}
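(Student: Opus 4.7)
The plan is to leverage two key correspondences in the Reinhardt setting: (1) torus-invariant plurisubharmonic functions on $\Omega$ correspond bijectively to convex functions on $\mathrm{Log}(\Omega)$, obtained by lifting through the exponential covering $\exp : \C^n \to (\C^*)^n$; and (2) a tube domain $B + i\R^n$ in $\C^n$ is pseudoconvex if and only if $B \subset \R^n$ is convex (the classical tube theorem). Both implications of the proposition will then reduce to combining these facts with a Hartogs-type filling argument near the coordinate hyperplanes met by $\Omega$.

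For the forward direction (pseudoconvex $\Rightarrow$ conditions), I would first pull $\Omega \cap (\C^*)^n$ back through $\exp$; the preimage is exactly the tube $\mathrm{Log}(\Omega) + i\R^n$, which inherits pseudoconvexity because it is the preimage of a pseudoconvex set under a holomorphic map. Convexity of $\mathrm{Log}(\Omega)$ would then drop out of the tube theorem. For the weak relative completeness, I would argue by contradiction: supposing there exist $(z', z_j, z'') \in \Omega$ and some $(p', 0, p'') \in \Omega$ with $(z', 0, z'') \notin \Omega$, torus invariance together with a small polydisc about $(p', 0, p'')$ would produce in $\Omega$ a Hartogs figure in the $z_j$ variable whose holomorphically convex hull contains $(z', 0, z'')$. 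Since every pseudoconvex domain is a domain of holomorphy and must contain the hulls of any Hartogs figures it contains, $(z', 0, z'')$ would have to lie in $\Omega$, a contradiction.

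For the reverse direction (conditions $\Rightarrow$ pseudoconvex), I would take a convex exhaustion $\psi: \mathrm{Log}(\Omega) \to \R$ (which exists for any open convex subset of $\R^n$) and set $\Psi(z) := \psi(\log|z_1|, \ldots, \log|z_n|)$ on $\Omega \cap (\C^*)^n$. Because each $\log|z_k|$ is pluriharmonic on $\C^*$ and $\psi$ is convex, $\Psi$ is automatically plurisubharmonic there. For each $j$ with $\Omega \cap \{z_j = 0\} \neq \varnothing$, weak relative completeness forces $\mathrm{Log}(\Omega)$ to contain an entire half-line in the $x_j \to -\infty$ direction above each of its points, which lets me modify $\psi$ near that hyperplane by a bounded term such as $-\log(R_j^2 - |z_j|^2)$ to obtain a psh function extending across $\{z_j = 0\}$. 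Patching these local modifications together would produce a global psh exhaustion of $\Omega$, yielding pseudoconvexity.

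The main obstacle will be this patching step: building a single, globally defined psh exhaustion of $\Omega$ by carefully combining the pullback $\Psi$ on $\Omega \cap (\C^*)^n$ with auxiliary functions across each coordinate hyperplane met by $\Omega$. Weak relative completeness is precisely the combinatorial hypothesis ensuring that such gluing is possible, but verifying that the combined function remains plurisubharmonic and genuinely exhausts $\Omega$ (not merely its open dense subset) requires delicate handling of the degeneration of the $\log|z_j|$ coordinates. This is where the bulk of the real work lies.
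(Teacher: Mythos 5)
The paper does not prove this proposition; it is quoted directly from Jarnicki--Pflug (Proposition 1.11.13) as a known background fact, so there is no in-paper argument to compare against and I am judging your proposal on its own terms. Your forward direction is the standard argument and is essentially sound: $\Omega\cap(\C^*)^n$ stays pseudoconvex and connected after removing the analytic set $\bigcup_j\{z_j=0\}$, its preimage under $\exp$ is the tube over $\mathrm{Log}(\Omega)$, and Bochner's tube theorem gives convexity. The Hartogs-figure argument for weak relative completeness is the right idea but under-specified: torus invariance plus a small polydisc about $(p',0,p'')$ does not by itself yield a Hartogs figure whose hull contains $(z',0,z'')$ when the two points are far apart; you must transport the filled disc from $(p',0,p'')$ to the disc $\{(z',\lambda,z''):|\lambda|\le|z_j|\}$ along a continuous family of analytic discs with boundaries in $\Omega$ (Kontinuit\"atssatz), and it is the already-established log-convexity that supplies such a family. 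As written this is a gap, though a repairable one.

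The more serious problem is in the converse. You correctly note that if $\Omega$ meets $\{z_j=0\}$ then (using completeness together with convexity) $\mathrm{Log}(\Omega)$ contains the entire downward ray in the $x_j$-direction through each of its points. But that observation defeats your construction: any convex \emph{exhaustion} $\psi$ of $\mathrm{Log}(\Omega)$ has compact sublevel sets, hence tends to $+\infty$ along those unbounded rays, so $\Psi=\psi\circ\mathrm{Log}$ blows up as $z_j\to 0$, i.e.\ at \emph{interior} points of $\Omega$. A bounded additive correction such as $-\log(R_j^2-|z_j|^2)$ cannot remove an unbounded singularity, so the patching step as described fails. The standard repairs are either (i) to use the ray-invariance to choose $\psi$ non-increasing in each complete direction (a convex exhaustion of the projection of $\mathrm{Log}(\Omega)$ along those directions, with properness restored by adding $|z|^2$ and a term reflecting boundary distance), or (ii) to bypass exhaustion functions entirely by writing $\Omega$ as an increasing union of Reinhardt domains cut out by finitely many monomial inequalities $|z^{\alpha}|<c$ --- each a domain of holomorphy --- and invoking the Behnke--Stein theorem; this is essentially the route taken in Jarnicki--Pflug. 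Either way, the step you flagged as ``the bulk of the real work'' is not merely delicate: the specific correction you propose does not work, and a genuinely different mechanism is required there.
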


It is easily verified from Proposition \ref{P:Pseudoconvex-Reinhardt} that the domains $\h_\gamma$ are pseudoconvex.

\subsubsection{Biholomorphisms of Reinhardt domains}

Recall that domains $\Omega_1$ and $\Omega_2$ are biholomorphic if there is a bijective holomorphic mapping $\Omega_1 \to \Omega_2$ whose inverse is also holomorphic.
A long standing problem in several complex variables aims to understand the moduli space of biholomorphically equivalent domains.
This problem is extremely difficult, even in relatively tame settings (e.g. bounded complete pseudoconvex Reinhardt domains in $\C^n$, see \cite{RongSTYau09,RongGaoYau10}).

In the study of biholomorphisms between Reinhardt domains, a simple family of maps takes center stage:

\begin{definition}\label{D:def-of-algebraic-equiv}
An {\em algebraic automorphism} of $(\C^*)^n$ is a holomorphic automorphism
\begin{equation*}
\Phi = (\Phi_1,\cdots,\Phi_n): (\C^*)^n \to (\C^*)^n
\end{equation*}
with components of the form
\begin{equation}
\Phi_j(z) = \alpha_j z_1^{a_{1j}} z_2^{a_{2j}}\cdots z_n^{a_{nj}},
\end{equation}
where the matrix $A = (a_{ij}) \in \mathrm{GL}_n(\Z)$.
Two Reinhardt domains are said to be {\em algebraically equivalent} if there is a biholomorphic mapping between them induced by an algebraic automorphism of $(\C^*)^n$.
\end{definition}


\begin{proposition}[\cite{Shim88} Section 4;  \cite{Kruz88} Section 1]\label{P:biholo=algequiv}
If two bounded Reinhardt domains are biholomorphic, they are algebraically equivalent.
\end{proposition}

Proposition \ref{P:biholo=algequiv} leads to a complete determination of the biholomorphic equivalence classes in the family $\{\h_\gamma: \gamma>0 \}$; see Theorem \ref{T:gen-Hartogs-biholo} below.

\subsection{Bergman kernel properties}\label{SS:BergKernProps}

Let $U$ and $V$ be two domains in $\C^n$ and suppose there is a biholomorphism $\Phi:U \to V$. 
Then the Bergman kernel of $U$ has zeros if and only if the Bergman kernel of $V$ has zeros.
More precisely, if
\begin{equation*}
X = \left\{ (z,w) \in U \times U : K_U(z,w) = 0 \right\}, \qquad Y = \left\{ (z',w') \in V \times V : K_V(z',w') = 0 \right\},
\end{equation*}
then 
\begin{equation}\label{E:zero-set-transformation}
Y = \left\{ (\Phi(z),\Phi(w)) : (z,w)\in X \right\}.
\end{equation}
This is a direct consequence of the biholomorphic transformation law:
\begin{equation}\label{E:Berg-biholo-trans-law}
K_U(z,w) = \det\Phi'(z) \cdot K_V(\Phi(z),\Phi(w)) \cdot \ol{\det \Phi'(w)}.
\end{equation}
Indeed, since $\Phi$ is a biholomorphism, $\det\Phi'$ is necessarily non-vanishing (see \cite{Krantz_scv_book}).
Thus in the sense of \eqref{E:zero-set-transformation}, the zero set of the Bergman kernel is a biholomorphic invariant.

The map $(z_1,z_2) \mapsto \big(z_1{z_2}^{-1},z_2\big)$ is a biholomorphism from $\h_1$ to $\D \times \D^*$ (a domain with a non-vanishing Bergman kernel), so we can immediately conclude $K_{\h_1}(z,w)$ is non-vanishing inside $\h_1 \times \h_1$.
This can also be seen directly from the formula
\begin{equation}\label{E:BergKernel-HartogsTri}
K_{\h_1}(z,w) = \frac{z_2 \ol{w}_2}{\pi^2(1-z_2\ol{w}_2)^2 (z_2\ol{w}_2 - z_1\ol{w}_1)^2}.
\end{equation}

\subsubsection{Orthonormal bases}
Given a Reinhardt domain $\Omega \subset \C^n$ and a multi-index $\alpha \in \Z^n$ with corresponding monomial $e_\alpha(z) = z^\alpha$, we define the set of $L^2$-allowable indices to be
\[
\cs_2(\Omega) := \{ \alpha \in \Z^n : e_\alpha \in A^2(\Omega) \}.
\]
Since the square-integrable monomials form an orthogonal basis for the Bergman space on $\Omega$, we are able to write the Bergman kernel as
\begin{equation}\label{E:Berg-kern-infinite-sum}
K_\Omega(z,w) = \sum_{\alpha \in \cs_2(\Omega)} \frac{e_\alpha(z) \ol{e_\alpha(w)}}{\norm{e_\alpha}_2^2}
= \sum_{\alpha \in \cs_2(\Omega)} \frac{ (z_1 \ol{w}_1)^{\alpha_1} \cdots (z_n \ol{w}_n)^{\alpha_n} }{\norm{e_\alpha}_2^2},
\end{equation}
where the series converges locally normally on $\Omega \times \Omega$.

Observe that on Reinhardt domains where the boundary intersects the center of Reinhardt symmetry (like $\h_\gamma$), it is possible for certain negative powers to appear on square-integrable monomials.
But an elementary computation shows that any perturbation of $\gamma$ causes the set $\cs_2(\h_\gamma) \subset \Z^2$ to either lose or gain an infinite number of indices.
This fact underlies the drastic difference in Bergman kernel formulas of the domains $\h_{\gamma_1} \neq \h_{\gamma_2}$, even when $ |\gamma_1 - \gamma_2|$ is taken to be arbitrarily small.

\subsubsection{Factorization of the Bergman kernel}
Equation \eqref{E:Berg-kern-infinite-sum} shows the Bergman kernel on a Reinhardt domain can be written as a function of the variables $t_j := z_j\ol{w}_j$. 
In other words, if $\psi : \C^n \times \C^n \to \C^n$ is the map
\begin{equation*}
\psi(z,w) = (z_1\ol{w}_1,\cdots,z_n\ol{w}_n),
\end{equation*}
the Bergman kernel factors as $K_\Omega = k_\Omega \circ \psi$, where $k_\Omega$ is a function defined on the closely related Reinhardt domain
\begin{equation}\label{E:def-Omega-tilde}
\wt{\Omega} = \{t \in \C^n : \exists \, z,w \in \Omega \,\, \mathrm{with} \,\, t = (z_1\ol{w}_1,\cdots,z_n\ol{w}_n) \}.
\end{equation}

When $\Omega$ is also assumed to be pseudoconvex, $\wt\Omega$ admits a simpler description.
Following \cite{ChakEdh24}, for any $m>0$ we define the $m$-th Reinhardt power of $\Omega$ to be the Reinhardt domain
\begin{equation}\label{E:def-Reinhardt-square}
\Omega^{(m)} := \left\{ z \in \C^n : (|z_1|^{\frac{1}{m}},\cdots,|z_n|^{\frac{1}{m}}) \in \Omega \right\}.
\end{equation}

\begin{proposition}\label{P:Omega-tilde=Omega2}
Let $\Omega \subset \C^n$ be a pseudoconvex Reinhardt domain.
Then $\wt\Omega = \Omega^{(2)}$.
\end{proposition}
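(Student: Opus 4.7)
The plan is to reduce the set equality to an equality of moduli projections, and then to a midpoint argument in $\mathrm{Log}(\Omega)$ combined with weak relative completeness to handle zero coordinates.

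First I would observe that both $\wt\Omega$ and $\Omega^{(2)}$ are Reinhardt: the latter by inspection of the definition, and the former because if $t_j = z_j \ol{w}_j$ with $z, w \in \Omega$, then $e^{i\theta_j} t_j = (e^{i\theta_j} z_j) \ol{w}_j$ with $(e^{i\theta_j} z_j) \in \Omega$ by Reinhardt symmetry of $\Omega$. So it suffices to show the two sets agree on moduli, i.e., for any $r = (r_1,\ldots,r_n) \in \R_{\ge 0}^n$, one has $r \in \wt\Omega$ if and only if $(r_1^{1/2},\ldots,r_n^{1/2}) \in \Omega$. The inclusion $\Omega^{(2)} \subset \wt\Omega$ is then immediate: given $r$ with $(r_j^{1/2}) \in \Omega$, I pick any $z \in \Omega$ whose $j$-th modulus is $r_j^{1/2}$ and take $w=z$, which yields $r_j = z_j \ol{w}_j$.

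For the reverse inclusion the task is to show that for any $z, w \in \Omega$, the point $p$ with $|p_j| = \sqrt{|z_j||w_j|}$ lies in $\Omega$. When all $z_j, w_j$ are non-zero, both $\log|z|$ and $\log|w|$ belong to $\mathrm{Log}(\Omega)$, and $\log|p|$ is their midpoint; convexity of $\mathrm{Log}(\Omega)$ finishes this case. To handle vanishing coordinates I set $T = \{j : z_j = 0 \text{ or } w_j = 0\}$, so that $p_j = 0$ precisely for $j \in T$ and $\Omega \cap \{z_j = 0\} \neq \varnothing$ for each $j \in T$. Weak relative completeness then allows me to iteratively zero out the $T$-coordinates of $z$ and $w$, remaining in $\Omega$, and producing $z', w' \in \Omega$ whose non-zero coordinates lie off $T$.

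I then pass to the slice $\Omega_T := \Omega \cap \{z_j = 0 \,\,\forall\, j \in T\}$, viewed as a Reinhardt domain in $\C^{n - |T|}$, and verify that $\Omega_T$ inherits pseudoconvexity. Its log shadow is the projection of $\mathrm{Log}(\Omega)$ off the $T$-coordinates (the equality of these two sets uses weak relative completeness in $\Omega$ to lift log-coordinates of $\Omega_T$ back to $\Omega$), hence convex; and weak relative completeness transfers to $\Omega_T$ because every coordinate hyperplane meeting $\Omega_T$ already meets $\Omega$. The non-vanishing case from the previous paragraph applied to $z', w'$ inside $\Omega_T$ now places $p \in \Omega_T \subset \Omega$, completing the argument.

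The main obstacle is this bookkeeping with the coordinate hyperplanes: the log-space midpoint argument is the heart of the proof, but literally fails at points having zero coordinates, so weak relative completeness must be invoked to justify descending to a lower-dimensional Reinhardt slice on which the convex-midpoint step becomes legitimate. Once that reduction is set up correctly, the rest is a routine consequence of the Reinhardt symmetry together with the characterization of pseudoconvex Reinhardt domains as those with convex log shadow and weak relative completeness.
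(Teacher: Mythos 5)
Your proof is correct and follows essentially the same route as the paper: the trivial inclusion $\Omega^{(2)} \subset \wt\Omega$ via $w=z$, the log-convexity midpoint argument for the reverse inclusion, and a reduction to a lower-dimensional Reinhardt slice when coordinates vanish. If anything, you are slightly more careful than the paper, which asserts "we take $z_j = w_j = 0$" without explicitly invoking weak relative completeness to justify zeroing out a coordinate of a point of $\Omega$, a step you spell out.
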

\begin{proof}
The containment $\Omega^{(2)} \subset \wt\Omega$ always holds.
Indeed, given $\zeta \in \Omega^{(2)}$, we have
\[
\zeta := (r_1 e^{i\theta_1}, \dots, r_n e^{i\theta_n}) \in \Omega^{(2)} 
\qquad \Longleftrightarrow \qquad
z := \left(\sqrt{r_1}, \dots, \sqrt{r_n} \right) \in \Omega.
\]
Now take $w = z$ and set $t := (z_1\ol{w}_1,\cdots,z_n\ol{w}_n) = (r_1,\cdots,r_n)$, which is contained in $\wt\Omega$ by definition \eqref{E:def-Omega-tilde}.
Since $\wt\Omega$ is Reinhardt, $\zeta \in \wt\Omega$.

The containment $\wt\Omega \subset \Omega^{(2)}$ requires pseudoconvexity.
Let $t = (t_1,\dots,t_n) \in \wt\Omega$ and choose
\[
z = (z_1,\dots,z_n)\in \Omega, \qquad w = (w_1,\dots,w_n) \in \Omega,
\]
such that $t_j = z_j \ol{w}_j$ for all $j$.
For now, assume that each $t_j \neq 0$.
Since $\Omega$ is pseudoconvex, the set $\mathrm{Log}(\Omega) \subset \R^n$ is convex and therefore the midpoint
\begin{equation}\label{E:midpoint-log-convex}
\left( \frac{\log|z_1|+\log|w_1|}{2} , \dots, \frac{\log|z_n|+\log|w_n|}{2}\right) \in \mathrm{Log}(\Omega).
\end{equation}
Upon noting that $\mathrm{Log}\big(\Omega^{(2)}\big) 
= \left\{ 2\eta : \eta \in \mathrm{Log}(\Omega) \right\}$, we see from \eqref{E:midpoint-log-convex} that
\[
(\log|t_1|,\dots,\log|t_n| ) = (\log|z_1| + \log|w_1|,\dots, \log|z_n| + \log|w_n| ) \in \mathrm{Log}\big(\Omega^{(2)}\big),
\]
which shows that $t = (t_1,\dots,t_n) \in \Omega^{(2)}$.

When some of the components $t_j$ of $t$ equal $0$, we take $z_j = w_j = 0$, then repeat the midpoint argument above on the logarithmic shadow of the lower dimensional Reinhardt domain obtained by intersecting $\Omega$ with the coordinate axes of the zeroed out components.
This lower dimensional domain is also pseudoconvex and its $m$-th Reinhardt powers are obtained by intersecting $\Omega^{(m)}$ with the same coordinate axes.
The rest of the argument goes through mutatis mutandis, confirming in this case that $t \in \Omega^{(2)}$.
\end{proof}


\begin{corollary}\label{C:comm-diag}
Given a pseudoconvex Reinhart domain  $\Omega \subset \C^n$, the map $\psi: \C^n \times \C^n \to \C^n$ restricts to a surjection of $\Omega \times \Omega$ onto $\Omega^{(2)}$.
Further, there is a holomorphic function $k_\Omega: \Omega^{(2)} \to \C$ so that the following diagram commutes
\begin{equation}\label{CD:General}
\begin{tikzcd}
\Omega \times \Omega \arrow{rr}{\psi} \arrow{rrd}[swap]{K_\Omega} && \Omega^{(2)} \arrow{d}{k_\Omega}\\
&& \C
\end{tikzcd}
\end{equation}
\end{corollary}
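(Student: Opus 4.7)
The plan rests on the observation that $k_\Omega$ is essentially forced on us by the Laurent expansion \eqref{E:Berg-kern-infinite-sum}. First, the surjectivity $\psi(\Omega\times\Omega) = \Omega^{(2)}$ is just a restatement of Proposition \ref{P:Omega-tilde=Omega2}: by \eqref{E:def-Omega-tilde} the image $\psi(\Omega\times\Omega)$ is $\wt\Omega$, and that proposition identifies $\wt\Omega$ with $\Omega^{(2)}$.

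Second, I would introduce the candidate Laurent series
\begin{equation*}
k_\Omega(t) := \sum_{\alpha \in \cs_2(\Omega)} \frac{t^\alpha}{\|e_\alpha\|_2^2},
\end{equation*}
so that commutativity of diagram \eqref{CD:General} is tautological from \eqref{E:Berg-kern-infinite-sum}, modulo convergence. To verify convergence at any $t \in \Omega^{(2)}$, set $r_j := |t_j|$. Definition \eqref{E:def-Reinhardt-square} says exactly that $z^0 := (\sqrt{r_1}, \ldots, \sqrt{r_n})$ lies in $\Omega$, and since $z^0$ is positive-real, evaluating \eqref{E:Berg-kern-infinite-sum} with $z = w = z^0$ produces
\begin{equation*}
\sum_{\alpha \in \cs_2(\Omega)} \frac{r^\alpha}{\|e_\alpha\|_2^2} \;=\; K_\Omega(z^0, z^0) < \infty,
\end{equation*}
a sum of nonnegative reals. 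Since absolute convergence of a Laurent series at a point depends only on coordinatewise moduli, this gives absolute convergence of the $k_\Omega$ series at $t$ itself.

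Third, to upgrade pointwise absolute convergence to genuine holomorphy, I would observe that $\Omega^{(2)}$ is itself a pseudoconvex Reinhardt domain: its logarithmic shadow equals $2\,\mathrm{Log}(\Omega)$, still convex, and weak relative completeness is inherited from $\Omega$ (one can check this on coordinate slices, just as in the proof of Proposition \ref{P:Omega-tilde=Omega2}). On such a domain, the classical fact that the absolute convergence set of a Laurent series is logarithmically convex, and that the series converges locally normally on the interior of that set (see, e.g., \cite{JarPflBook08}), delivers holomorphy of $k_\Omega$ on $\Omega^{(2)}$ for free.

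The only step with real content is this final appeal to standard Reinhardt Laurent-series theory; however, it is more a matter of careful citation than substantive argument. Once Proposition \ref{P:Omega-tilde=Omega2} is in hand, the rest of the corollary is bookkeeping, and no genuine obstacle arises.
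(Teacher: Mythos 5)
Your proposal is correct and follows essentially the same route as the paper: surjectivity of $\psi$ onto $\Omega^{(2)}$ is read off from Proposition \ref{P:Omega-tilde=Omega2}, and $k_\Omega$ is defined by the Laurent series \eqref{E:holo-func-on-Omega2}, whose local normal convergence on $\Omega^{(2)}$ the paper deduces directly from that of \eqref{E:Berg-kern-infinite-sum}. Your intermediate step (pointwise absolute convergence via $K_\Omega(z^0,z^0)<\infty$ on the diagonal, then the standard Reinhardt Laurent-series theory) merely fills in the detail the paper leaves implicit.
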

\begin{proof}
Since $\psi(\Omega \times \Omega) = \wt\Omega = \Omega^{(2)}$, the factorization of $K_\Omega = k_\Omega \circ \psi$ illustrated in \eqref{CD:General} holds.  
The function $k_\Omega$ is holomorphic on $\Omega^{(2)}$ because the power series 
\begin{equation}\label{E:holo-func-on-Omega2}
k_\Omega(t_1,\cdots,t_n) = \sum_{\alpha \in \cs_2(\Omega)} \frac{ t_1^{\alpha_1} \cdots t_n^{\alpha_n} }{\norm{e_\alpha}_2^2}
\end{equation}
converges locally normally on $\Omega^{(2)}$ -- this is a direct consequence of the local normal convergence of the series defining the Bergman kernel on $\Omega \times \Omega$ in \eqref{E:Berg-kern-infinite-sum}.
\end{proof}

\begin{definition}
Given a pseudoconvex Reinhardt domain $\Omega \subset \C^n$, we call the function $k_\Omega : \Omega^{(2)} \to \C$ the {\em holomorphic factor of the Bergman kernel}.
\end{definition}

\begin{corollary}\label{C:LQK-Reinhardt-shadow}
Let $\Omega$ be a pseudoconvex Reinhardt domain.
The zero set of the Bergman kernel $K_\Omega$ in $\Omega \times \Omega$ is the pullback via $\psi$ of the zero set of the holomorphic factor $k_\Omega$ restricted to $\Omega^{(2)}$.
In particular, $K_\Omega$ has zeros in $\Omega \times \Omega$ if and only if $k_\Omega$ has zeros in $\Omega^{(2)}$.
\end{corollary}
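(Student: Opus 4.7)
The plan is to read off the result directly from the commutative diagram \eqref{CD:General} established in Corollary \ref{C:comm-diag}. The factorization $K_\Omega = k_\Omega \circ \psi$ says that the value of the Bergman kernel at any point $(z,w) \in \Omega \times \Omega$ depends only on $\psi(z,w) = (z_1\ol{w}_1,\ldots,z_n\ol{w}_n) \in \Omega^{(2)}$. Consequently
\[
K_\Omega(z,w) = 0 \quad \Longleftrightarrow \quad k_\Omega(\psi(z,w)) = 0,
\]
which shows immediately that $\{K_\Omega = 0\} = \psi^{-1}\big(\{k_\Omega = 0\}\big)$. That establishes the first assertion.

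For the biconditional statement, both implications are now easy. If $k_\Omega(t_0) = 0$ for some $t_0 \in \Omega^{(2)}$, then the surjectivity of $\psi : \Omega \times \Omega \twoheadrightarrow \Omega^{(2)}$ (part of Corollary \ref{C:comm-diag}, which in turn rests on Proposition \ref{P:Omega-tilde=Omega2}) furnishes a pair $(z_0,w_0) \in \Omega \times \Omega$ with $\psi(z_0,w_0) = t_0$, and then $K_\Omega(z_0,w_0) = k_\Omega(t_0) = 0$. Conversely, if $K_\Omega(z_0,w_0) = 0$, then $t_0 := \psi(z_0,w_0)$ lies in $\Omega^{(2)}$ and satisfies $k_\Omega(t_0) = 0$.

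The main (and only mild) subtlety is ensuring that one really has surjectivity of $\psi$ onto all of $\Omega^{(2)}$, not merely onto the torus-generic part; but this is exactly the content of Proposition \ref{P:Omega-tilde=Omega2}, whose proof already handles the degenerate case where some coordinates of $t$ vanish. With that in hand, no further argument is needed, so the corollary is a direct logical consequence of the preceding results.
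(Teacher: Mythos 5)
Your argument is correct and is exactly the route the paper intends: the corollary is stated without proof precisely because it follows immediately from the factorization $K_\Omega = k_\Omega \circ \psi$ and the surjectivity of $\psi$ onto $\Omega^{(2)}$ established in Corollary \ref{C:comm-diag}. Your explicit handling of both directions (including noting that surjectivity covers the degenerate coordinates via Proposition \ref{P:Omega-tilde=Omega2}) matches and slightly elaborates the paper's implicit reasoning.
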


\subsection{Properties of $\h_\gamma$}

For later use, we record some observations about $\h_\gamma$.
First see that the generalized Hartogs triangles are  unchanged by taking Reinhardt powers.

\begin{proposition}\label{P:gen-Hartogs-are-their-own-Reinhardt-powers}
For $m>0$, $\gamma>0$ the generalized Hartogs triangle satisfies $\h_\gamma^{(m)} = \h_\gamma$, and so the holomorphic factor of the Bergman kernel is itself a function $k_\gamma : \h_\gamma \to \C$.
\end{proposition}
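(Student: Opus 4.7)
The plan is to verify the identity $\h_\gamma^{(m)} = \h_\gamma$ by a direct unpacking of the definition in \eqref{E:def-Reinhardt-square}, exploiting the fact that the inequalities cutting out $\h_\gamma$ in \eqref{E:def=gen-Hartogs} are homogeneous in the moduli. The conclusion about $k_\gamma$ then follows immediately from the setup established in Corollary \ref{C:comm-diag}, applied with $m=2$.

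First, I would substitute $\Omega = \h_\gamma$ into the definition of the $m$-th Reinhardt power. A point $(z_1,z_2) \in \C^2$ lies in $\h_\gamma^{(m)}$ if and only if $\bigl(|z_1|^{1/m},|z_2|^{1/m}\bigr) \in \h_\gamma$, which by \eqref{E:def=gen-Hartogs} unwinds to
\begin{equation*}
|z_1|^{\gamma/m} < |z_2|^{1/m} < 1.
\end{equation*}
Since $t \mapsto t^m$ is a strictly increasing bijection of $[0,\infty)$ to itself, raising the above chain of inequalities to the $m$-th power is reversible and yields the equivalent condition $|z_1|^\gamma < |z_2| < 1$, i.e., $(z_1,z_2) \in \h_\gamma$. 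This gives $\h_\gamma^{(m)} = \h_\gamma$ for every $m>0$.

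For the second assertion, I would note that $\h_\gamma$ is pseudoconvex (as remarked after Proposition 2.1 in the excerpt), so Proposition \ref{P:Omega-tilde=Omega2} applies and gives $\wt{\h_\gamma} = \h_\gamma^{(2)}$. Combining this with the identity just proved (specialized to $m=2$) yields $\wt{\h_\gamma} = \h_\gamma$. Finally, Corollary \ref{C:comm-diag} produces the holomorphic factor $k_{\h_\gamma} : \h_\gamma^{(2)} \to \C$, which under this identification is simply a function $k_\gamma : \h_\gamma \to \C$, completing the proof.

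There is no real obstacle here: the work is entirely in recognizing that the defining inequalities of $\h_\gamma$ scale correctly under the $m$-th root operation, and in invoking the general framework of Proposition \ref{P:Omega-tilde=Omega2} and Corollary \ref{C:comm-diag}. The only thing to be careful about is the case where one of the $z_j$ is zero, but this is handled uniformly because the defining inequalities of $\h_\gamma$ remain valid and homogeneous in that limit, and the proof of Proposition \ref{P:Omega-tilde=Omega2} already addressed the axis components.
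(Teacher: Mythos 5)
Your proof is correct and follows essentially the same route as the paper: the identity $\h_\gamma^{(m)} = \h_\gamma$ is verified by the same chain of equivalences (unwinding the definition and raising the moduli inequalities to the $m$-th power), and the statement about $k_\gamma$ follows, as in the paper, from Proposition \ref{P:Omega-tilde=Omega2} and Corollary \ref{C:comm-diag} applied with $m=2$. Your extra remark about the case $z_j=0$ is harmless but not needed here, since the equivalence of the moduli inequalities holds verbatim when a coordinate vanishes.
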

\begin{proof}
Given $m, \gamma > 0$, we have the following chain of equivalent statements:
\begin{align*}
z \in \h_{\gamma}^{(m)} \quad \Longleftrightarrow \quad \big(|z_1|^{\frac{1}{m}},|z_2|^{\frac{1}{m}}\big) \in \h_\gamma \quad &\Longleftrightarrow \quad |z_1|^{\frac{\gamma}{m}} < |z_2|^{\frac{1}{m}}  < 1 \\
&\Longleftrightarrow \quad |z_1|^\gamma < |z_2|  < 1 \qquad \Longleftrightarrow \quad z \in \h_\gamma.
\end{align*}
Since $\h_{\gamma}^{(2)} = \h_\gamma$, the holomorphic factor $k_\gamma$ is in fact a function on the original domain.
\end{proof}

\begin{theorem}\label{T:gen-Hartogs-biholo}
Let $\gamma,\eta > 0$.
The domains $\h_\gamma$ and $\h_\eta$ are biholomorphic if and only if 
\begin{equation}\label{E:nec-and-suff-cond-biholo}
\frac{1}{\eta} - \frac{1}{\gamma} := b \in \Z.
\end{equation}
In this case, the map $\Phi_{1,b}(z) = (z_1 z_2^b, z_2)$ is a biholomorphism $\h_\gamma \to \h_\eta$.
\end{theorem}
\begin{proof}
Let $\gamma,\eta > 0$ and suppose that $\h_\gamma$ and $\h_\eta$ are biholomorphic.
Since these domains are Reinhardt, Proposition \ref{P:biholo=algequiv} above says they are algebraically equivalent in the sense of Definition \ref{D:def-of-algebraic-equiv}.
Thus, there is a biholomorphism $\Phi:\h_\gamma \to \h_\eta$ of the form
\begin{align}
w = \Phi(z) &= (\Phi_1(z),\Phi_2(z)) \notag \\
&= (Az_1^a z_2^b, B z_1^c z_2^d)
= (w_1,w_2), \label{E:form1-monomial-map}
\end{align}
with $A,B \in \C^*$ and $a,b,c,d \in \Z$ with $ad-bc \neq 0$.
We now write
\begin{align*}
\h_\gamma = \{|z_1|^\gamma < |z_2| < 1 \}, \qquad
\h_\eta = \{|w_1|^\eta < |w_2| < 1 \},
\end{align*}
and turn to finding conditions on the parameters $A,B,a,b,c,d$ in \eqref{E:form1-monomial-map}.

Since points $z = (z_1,z_2) \in \h_\gamma$ are allowed to have $z_1=0$, this forces $a \ge 0$ and $c \ge 0$.

In fact, we must have $a>0$ because no point $(z_1,z_2) \in \h_\gamma$ has $z_2 = 0$ in its second coordinate, and therefore the only possible way to use $\Phi$ to map onto a point $(0,w_2) \in \h_\eta$ from a point $(z_1,z_2) \in \h_\gamma$ would be for $z_1 = 0$ and $a>0$.

Now notice that if $c>0$, then every point of the form $(0,z_2)$ would be mapped by $\Phi$ to $(0,0) \notin \h_\eta$.
We thus conclude $c = 0$, and therefore $ad \neq 0$ from the non-vanishing determinant condition above. 
This means
\begin{equation}\label{E:form2-monomial-map}
\Phi(z) = (Az_1^a z_2^b, B z_2^d),
\end{equation}
with $A,B \in \C^*$, and $a,b,d \in \Z$ with $a>0$ and $d \neq 0$.

We can also conclude that $d>0$ as follows.
Since $\h_\eta \subset \D \times \D^*$, the second component of $\Phi$ must satisfy
\begin{equation}\label{E:second-comp-ineq}
|\Phi_2(z)| = |B z_2^d| < 1.
\end{equation}
But since we are allowed to choose points $(z_1,z_2) \in \h_\gamma$ with $z_2 \to 0$, the bound in \eqref{E:second-comp-ineq} is only possible for $d>0$.

Now observe that $z_1$ only appears in the first component of the map \eqref{E:form2-monomial-map}.
This forces $a=1$, since otherwise all points $(w_1,w_2) \in \h_\eta$ with $w_1 \neq 0$ would have multiple pre-images under $\Phi$.
The same line of thinking also shows that $d=1$, since, again, points in $\h_\eta$ would otherwise have multiple pre-images.
This now means
\begin{equation}\label{E:form3-monomial-map}
\Phi(z) = (Az_1 z_2^b, B z_2),
\end{equation}
with $A,B \in \C^*$, and $b \in \Z$.


Since $\Phi : \h_\gamma \to \h_\eta$ is a biholomorphism, $z \in \h_\gamma$ if and only if $w = \Phi(z) \in \h_\eta$, which is to say
\begin{equation}\label{E:biholo-form1} 
|w_1|^\eta < |w_2| < 1 \qquad \Longleftrightarrow \qquad |z_1|^\gamma < |z_2| < 1. 
\end{equation}

On the other hand, \eqref{E:form3-monomial-map} shows
\begin{align}
|w_1|^\eta < |w_2| < 1 \qquad &\Longleftrightarrow \qquad |w_1|^\eta < |w_2| \notag \quad \mathrm{and} \quad 0 < |w_2| < 1, \notag \\
&\Longleftrightarrow \qquad |A z_1 z_2^b|^\eta < |B z_2| \quad \mathrm{and} \quad 0 < |B z_2| < 1, \notag \\
&\Longleftrightarrow \qquad |B|^{-1} |A z_1|^\eta < |z_2|^{1-b\eta} \notag \quad \mathrm{and} \quad 0 < |z_2| < |B|^{-1}, \notag \\
&\Longleftrightarrow \qquad |B|^{1/(b\eta-1)} |A z_1|^{1 / {(\frac{1}{\eta}-b})} < |z_2| \quad \mathrm{and} \quad 0 < |z_2| < |B|^{-1}. \label{E:biholo-form2}
\end{align}
Now comparing \eqref{E:biholo-form1} and \eqref{E:biholo-form2}, we obtain two equivalent descriptions for the source domain
\begin{align}
\h_\gamma &= \{ (z_1,z_2) : |z_1|^\gamma < |z_2| < 1 \} \notag \\
&=  \left\{ (z_1,z_2) : |B|^{1/(b\eta-1)} | A z_1|^{1 / {(\frac{1}{\eta}-b})} < |z_2| < |B|^{-1} \right\}. 
\end{align}
Thus, we see that this is possible if and only if the following three conditions hold
\begin{align}
|B|^{1/(b\eta-1)} |A|^{1 / {(\frac{1}{\eta}-b})} = 1, \qquad 1/(\tfrac{1}{\eta}-b) = \gamma, \qquad |B|^{-1} = 1.
\end{align}
This shows that there are $\theta_1,\theta_2 \in \R$ so that $A = e^{i\theta_1}, B = e^{i\theta_2}$ and 
\begin{equation*}
\frac{1}{\eta} - \frac{1}{\gamma} = b.
\end{equation*}
Since $b$ is an integer, the last displayed equation gives the necessary and sufficient condition in \eqref{E:nec-and-suff-cond-biholo}.
We see that when $\gamma,\eta > 0$ satisfy this, every algebric automorphism of the form $\Phi(z) = (e^{i\theta_1} z_1 z_2^b, e^{i\theta_2} z_2)$ restricts to a biholomorphism of $\h_\gamma \to \h_\eta$.
Setting $\theta_1 = \theta_2 = 0$ gives the map $\Phi_{1,b}$, as claimed.
\end{proof}

See Zapa\l owski \cite{Zapalowski17} for related results on proper holomorphic maps between higher dimensional variants of the Hartogs triangle.

We now see that each biholomorphic equivalence class in the domain family $\{\h_\gamma : \gamma > 0 \}$ contains a unique representative $\h_{\wt\gamma}$ with $\wt\gamma \ge 1$.

\begin{corollary}\label{C:biholo-equiv-classes}
Given $\gamma, \eta \ge 1$ with $\gamma \neq \eta$, the domains $\h_\gamma$ and $\h_\eta$ are biholmorphically inequivalent.
But given $\gamma \in (0,1)$, there is a unique $\wt{\gamma} \ge 1$ so that $\h_\gamma$ is biholomorphic to $\h_{\wt\gamma}$.
\end{corollary}
\begin{proof}
Suppose without loss of generality that $\gamma,\eta \ge 1$, $\gamma \neq \eta$, satisfy 
$1\le\eta<\gamma$. 
Then
\begin{equation}
\frac{1}{\gamma} < \frac{1}{\eta} \le 1 \qquad \Longrightarrow \qquad 0 < \frac{1}{\eta} - \frac{1}{\gamma} \le 1 - \frac{1}{\gamma},
\end{equation}
which means $\frac{1}{\eta} - \frac{1}{\gamma}$ cannot be an integer, and thus by Theorem \ref{T:gen-Hartogs-biholo}, $\h_\gamma$ and $\h_\eta$ cannot be biholomorphic.

Now let $\gamma \in (0,1)$. 
If $\frac{1}{\gamma} \in \Z$, then $\h_\gamma$ is biholomorphic to $\h_1$. 
Indeed, set $\wt\gamma = 1$ so that
\begin{equation*}
\frac{1}{\gamma} - \frac{1}{\wt\gamma} \in \Z,
\end{equation*}
and now invoke  Theorem \ref{T:gen-Hartogs-biholo}.

If $\frac{1}{\gamma} \notin \Z$, let us define 
\begin{equation}
x := \frac{1}{\gamma} - \left\lfloor \frac{1}{\gamma} \right\rfloor \in (0,1),
\end{equation}
and set 
\begin{equation}
\wt\gamma := \frac{1}{x} > 1.
\end{equation}
It is now easy to see that
\begin{align}
\frac{1}{\gamma} - \frac{1}{\wt\gamma} = \frac{1}{\gamma} - x = \left\lfloor \frac{1}{\gamma} \right\rfloor \in \Z.
\end{align}
Theorem \ref{T:gen-Hartogs-biholo} again gives the result.
\end{proof}


\section{An effective Bergman kernel formula}\label{S:effective-formula}

Let $m>n$ be relatively prime positive integers.
We begin with the numerator polynomial of the Bergman kernel of $\h_{m/n}$ given in \eqref{E:Almugh-formula}:
\begin{equation}\label{E:Almugh-numerator}
\sum_{\beta\in \Z^2} D_m(\beta_1) D_m(m\beta_2+n\beta_2 + m + n -1 -2mn) s^{\beta_1} t^{\beta_2} := P_{m,n}(s,t).
\end{equation}
Our goal is to eliminate the $D_m$ function that was defined in \eqref{E:Dbasic_def}.

\subsection{Determining non-zero coefficients}

To condense the presentation of \eqref{E:Almugh-numerator}, we introduce the quantities
\begin{align}
&\kappa(\beta_1,\beta_2) := m\beta_2+n\beta_1 + m + n -1 -2mn, \label{E:def-of-kappa(b1,b2)} \\
&C(\beta_1,\beta_2) := D_m(\beta_1) D_m(\kappa(\beta_1,\beta_2)). \label{E:def-of-C(b1,b2)}
\end{align}

We now determine all pairs $(\beta_1,\beta_2) \in \Z^2$ such that $C(\beta_1,\beta_2) \neq 0$. 

\begin{proposition}\label{P:Finding-effective-coefficients}
Fix a rational number $\frac{m}{n}>1$, $\gcd(m,n)=1$, and let $C(\beta_1,\beta_2)$ be defined as in \eqref{E:def-of-C(b1,b2)}.
The complete list of pairs $(\beta_1,\beta_2) \in \Z^2$ for which $C(\beta_1,\beta_2) \neq 0$ is determined explicitly.

\begin{itemize}
\item[(0)] $C(\beta_1,\beta_2) \neq 0$ if and only if both $0 \le \beta_1 \le 2m-2$ and $0 \le \kappa(\beta_1,\beta_2) \le 2m-2$.
\end{itemize}

\noindent  Case 1: $\beta_1 = m-1$ or $\kappa(\beta_1,\beta_2) = m-1$.

\begin{itemize}
\item[(1a)] If $\beta_1 = m-1$, the only possible $\beta_2$ value for which $C(\beta_1,\beta_2) \neq 0$ is $\beta_2 = n$, in which case $\kappa(\beta_1,\beta_2) = m-1$.

\item[(1b)] Suppose that $\beta_1,\beta_2$ are chosen so that $C(\beta_1,\beta_2) \neq 0$ and $\kappa(\beta_1,\beta_2) = m-1$.
Then $\beta_1 = m-1$ and $\beta_2 = n$.
\end{itemize}

In what follows, define 
\begin{equation}\label{E:def-of-mn-ceiling-func}
L(j) = \left\lceil \frac{1+n(j+1)}{m} \right\rceil.
\end{equation}

\noindent  Case 2: Let $\beta_1 = j \in \{0,1,\cdots,m-2 \}$ be fixed.
\begin{itemize}

\item[(2a)] 
There is a unique $\beta_2(j) \in \Z$ so that $\kappa(j,\beta_2(j)) \in \{0,1,\cdots,m-2 \}$.

Explicitly, 
\begin{equation}
\beta_2(j) = 2n - L(j), \qquad \kappa(j,\beta_2(j)) = m + n -1 + nj - m L(j),
\end{equation}
so each $\beta_2(j) \in \{n,n+1, \cdots, 2n-1 \}$.
Distinct choices of $j_1,j_2 \in \{0,1,\cdots,m-2 \}$ yield $\kappa(j_1,\beta_2(j_1)) \neq \kappa(j_2,\beta_2(j_2))$, meaning that $\kappa(j,\beta_2(j))$ assumes each value in $\{0,1,\cdots,m-2 \}$ exactly once as $j$ ranges in $\{0,1,\cdots,m-2 \}$.

\item[(2b)] 
There is a unique $\tilde{\beta}_2(j) \in \Z$ so that $\kappa(j,\tilde\beta_2(j)) \in \{m,m+1,\cdots, 2m-2 \}$.

Explicitly, 
\begin{alignat}{2}
\tilde\beta_2(j) &= 2n + 1 - L(j), \qquad \kappa(j,\tilde\beta_2(j)) &&= 2m + n -1 + nj - m L(j), \\
 &= \beta_2(j) + 1, \qquad &&= \kappa(j,\beta_2(j)) + m. \notag
\end{alignat}
so each $\tilde\beta_2(j) \in \{n+1,n+2, \cdots, 2n \}$.
Distinct choices of $j_1,j_2 \in \{0,1,\cdots,m-2 \}$ yield $\kappa(j_1,\tilde\beta_2(j_1)) \neq \kappa(j_2,\tilde\beta_2(j_2))$, meaning that $\kappa(j,\tilde\beta_2(j))$ assumes each value in $\{m,m+1,\cdots,2m-2 \}$ exactly once as $j$ ranges in $\{0,1,\cdots,m-2 \}$.
\end{itemize}

\noindent  Case 3: Let $\beta_1 = j \in \{m,m+1,\cdots,2m-2 \}$ be fixed.

\begin{itemize}

\item[(3a)] 
There is a unique $\beta_2(j) \in \Z$ so that $\kappa(j,\beta_2(j)) \in \{0,1,\cdots,m-2 \}$.

Explicitly, 
\begin{equation}
\beta_2(j) = 2n - L(j), \qquad \kappa(j,\beta_2(j)) = m + n -1 + nj - m L(j),
\end{equation}
so each $\beta_2(j) \in \{0,1, \cdots, n-1 \}$.
Distinct choices of $j_1,j_2 \in \{m,m+1,\cdots,2m-2 \}$ yield $\kappa(j_1,\beta_2(j_1)) \neq \kappa(j_2,\beta_2(j_2))$, meaning that $\kappa(j,\beta_2(j))$ assumes each value in $\{0,1,\cdots,m-2 \}$ exactly once as $j$ ranges in $\{m,m+1,\cdots,2m-2 \}$.

\item[(3b)] 
There is a unique $\tilde{\beta}_2(j) \in \Z$ so that $\kappa(j,\tilde\beta_2(j)) \in \{m,m+1,\cdots, 2m-2 \}$.

Explicitly, 
\begin{alignat}{2}
\tilde\beta_2(j) &= 2n + 1 - L(j), \qquad \kappa(j,\tilde\beta_2(j)) &&= 2m + n -1 + nj - m L(j), \\
 &= \beta_2(j) + 1, \qquad &&= \kappa(j,\beta_2(j)) + m. \notag
\end{alignat}
so each $\tilde\beta_2(j) \in \{1,2, \cdots, n \}$.
Distinct choices of $j_1,j_2 \in \{m,m+1,\cdots,2m-2 \}$ yield $\kappa(j_1,\tilde\beta_2(j_1)) \neq \kappa(j_2,\tilde\beta_2(j_2))$, meaning that $\kappa(j,\tilde\beta_2(j))$ assumes each value in $\{m,m+1,\cdots,2m-2 \}$ exactly once as $j$ ranges in $\{m,m+1,\cdots,2m-2 \}$.
\end{itemize}
\begin{proof}
Statement $(0)$ is clear from the definition of $D_m(\beta)$ given in \eqref{E:Dbasic_def}, and it sets the stage for the three subsequent cases.

Case $1$ concerns the situation when either $\beta_1$ or $\kappa(\beta_1,\beta_2)$ is equal to $m-1$, that is, the input value where the function $D_m$ peaks.

For $(1a)$, let $\beta_1 = m-1$. 
If $\beta_2 \in \Z$ is chosen so that $0 \le \kappa(m-1,\beta_2) \le 2m-2$ (which is required for $C(\beta_1,\beta_2)$ to be non-zero), definition \eqref{E:def-of-kappa(b1,b2)} shows
\begin{equation*}
n-1+\frac{1}{m} \le \beta_2 \le n + 1 -\frac{1}{m}.
\end{equation*}
This forces $\beta_2 = n$, and a computation shows $\kappa(m-1,n) = m-1$.

For $(1b)$, assume $\beta_1,\beta_2$ are chosen so that $\kappa(\beta_1,\beta_2) = m-1$.
This means
\begin{equation*}
m(\beta_2-2n) = -n(\beta_1+1) \qquad \Longrightarrow \qquad \beta_1 \equiv -1 \mod m,
\end{equation*}
since $\gcd(m,n)=1$.
Since $0\le \beta_1 \le 2m-2$ (which is required for $C(\beta_1,\beta_2)$ to be non-zero), this forces $\beta_1 = m-1$, which again implies that $\beta_2 = n$.
This concludes case $1$.

In Case 2, we let $\beta_1 = j \in \{0,1,\cdots,m-2 \}$, i.e., the inputs where $D_m$ is ascending, up to (but not including) the peak input at $j=m-1$.

For $(2a)$, assume that $\beta_2$ is chosen so that $0 \le \kappa(j,\beta_2) \le m-2$.
This means 
\begin{equation}\label{E:Case2_beta2_interval_1}
2mn +1 -m -n -nj \le m \beta_2 \le 2mn -1 -n -nj
\end{equation}
In the case $m=2$ (which forces $n=1$), we immediately have $j=0$, $\beta_2 = 1$ and $\kappa(0,1) = 0$.
For integers $m \ge 3$, \eqref{E:Case2_beta2_interval_1} is a closed interval of length $m-2$ with integer endpoints (meaning it contains $m-1$ integers).
We claim this interval must contain a multiple of $m$.
If not, the left endpoint would have to satisfy
\begin{equation*}
2mn +1 -m -n -nj \equiv 1 \mod m.
\end{equation*}
But this would then necessitate
\begin{equation*}
-n(j+1) \equiv 0 \mod m \qquad \Longrightarrow \qquad j \equiv -1 \mod m,
\end{equation*}
which is impossible since $j \in \{0,1 \cdots, m-2\}$.
Thus, for each $j$ in this range, there is a unique $\beta_2 = \beta_2(j) \in \Z$ satisfying \eqref{E:Case2_beta2_interval_1}, given by
\begin{equation}\label{E:beta2-def-inside-proof}
\beta_2(j) = \left\lfloor 2n - \frac{1+n(j+1)}{m}  \right\rfloor = 2n - L(j),
\end{equation}
where $L(j)$ is defined by \eqref{E:def-of-mn-ceiling-func}. 
Since $\frac{n}{m}<1$, $L(j)$ is a non-decreasing function of $j$.
Inspection shows that $L(0) = 1$ and $L(m-2) = n$, with each integer value between $1$ and $n$ attained at least once.
This means that $\beta_2(0) = 2n-1$ and that $\beta_2(m-2) = n$, with every integer value between attained by some $\beta_2(j)$ at least once.

Inserting \eqref{E:beta2-def-inside-proof} into \eqref{E:def-of-kappa(b1,b2)} shows 
\begin{equation*}
\kappa(j,\beta_2(j)) = m+n-1+nj-mL(j).
\end{equation*}
Now if $j_1,j_2 \in \{0,1,\cdots,m-2\}$ are chosen so that $\kappa(j_1,\beta_2(j_1)) = \kappa(j_2,\beta_2(j_2))$, we must have that
\begin{equation*}
n(j_1-j_2) = m(L(j_1)-L(j_2)) \qquad \Longrightarrow \qquad j_1 \equiv j_2 \mod m,
\end{equation*}
which is only possible if $j_1=j_2$.
We thus conclude that $\kappa(j,\beta_2(j))$ attains each value in $\{0,1,\cdots,m-2 \}$ exactly once as $j$ varies throughout the same range.

Part $(2b)$ is easily deduced from $(2a)$. 
Take $\tilde\beta_2(j) = \beta_2(j)+1$, where $\beta_2(j)$ is given by \eqref{E:beta2-def-inside-proof}, and see from \eqref{E:def-of-kappa(b1,b2)} that
\begin{align*}
\kappa(j,\tilde\beta_2(j)) &= m \tilde\beta_2(j) + nj + m + n - 1 - 2mn \\
&= m + m\beta_2(j) + nj + m + n - 1 - 2mn \\
&= m + \kappa(j,\beta_2(j)).
\end{align*}
Now since $\{m,m+1,\cdots,2m-2 \}$ is just a shift of $\{0,1,\cdots,m-2 \}$ by $m$ units, all claims made in $(2b)$ are equivalent to those in $(2a)$.
This concludes case 2.

Case 3 is handled similarly, but now $\beta_1 = j$ ranges in $\{m,m+1,\cdots,2m-2\}$, i.e., the range of values after the peak input at $j=m-1$ on which $D_m$ is descending.

For $(3a)$, assume $\beta_2$ is chosen so that $0 \le \kappa(j,\beta_2) \le m-2$.
This yields the same inequalities that appeared in \eqref{E:Case2_beta2_interval_1}, only now with a different range of $j$ values.
When $m=2$ (which forces $n=1$), we immediately have that $j=2$, so $\beta_2 = 0$ and $\kappa(2,0) = 0$.
For integers $m \ge 3$, \eqref{E:Case2_beta2_interval_1} is a closed interval of length $m-2$ with integer endpoints.
Following reasoning identical to that given above for $(2a)$, we see that for each $j \in \{m,m+1,\cdots,2m-2\}$, there is a unique $\beta_2 = \beta_2(j) \in \Z$ satisfying \eqref{E:Case2_beta2_interval_1}.
Once again $\beta_2(j) = 2n - L(j)$, the same formula provided in \eqref{E:beta2-def-inside-proof} for $j \in  \{0,1,\cdots,m-2 \}$.
Inspection shows that $\beta_2(m) = n-1$, and that $\beta_2(2m-2) = 0$, and reasoning identical to that given above shows that $\beta_2(j)$ attains each integer between $n-1$ and $0$ at least once.

Directly evaluating in \eqref{E:def-of-kappa(b1,b2)} again shows that $\kappa(j,\beta_2(j)) = m+n-1+nj-mL(j)$, and identical reasoning to that given above shows that if $\kappa(j_1,\beta_2(j_1)) = \kappa(j_2,\beta_2(j_2))$ for $j_1,j_2 \in \{m,m+1,\cdots,2m-2 \}$, then $j_1 = j_2$.
We thus conclude that $\kappa(j,\beta_2(j))$ attains each value in $\{0,1,\cdots,m-2 \}$ exactly once as $j$ varies throughout $\{m,m+1,\cdots,2m-2 \}$.

Part $(3b)$ is easily deduced from $(3a)$ in a way that parallels how $(2b)$ was deduced from $(2a)$.
We once again set $\tilde\beta_2(j) = \beta_2(j)+1$, which is seen to imply that $\kappa(j,\tilde\beta_2(j)) = m + \kappa(j,\beta_2(j))$.
As above, all claims made in $(3b)$ are equivalent to those made in $(3a)$.
This concludes case 3 and completes the proof.
\end{proof}
\end{proposition}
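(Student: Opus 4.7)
The plan is to exploit two structural facts about the coefficient $C(\beta_1,\beta_2) = D_m(\beta_1) D_m(\kappa(\beta_1,\beta_2))$. First, $D_m$ is nonzero precisely on $\{0,1,\ldots,2m-2\}$, so $C(\beta_1,\beta_2) \neq 0$ is equivalent to both $\beta_1$ and $\kappa(\beta_1,\beta_2)$ lying in this window, immediately giving (0). Second, $\kappa$ is affine in $\beta_2$ with slope $m$, so incrementing $\beta_2$ by $1$ shifts $\kappa$ by exactly $m$. Thus, for each admissible $\beta_1$, at most two integers $\beta_2$ can place $\kappa$ in $\{0,\ldots,2m-2\}$: one placing $\kappa$ in the lower band $\{0,\ldots,m-2\}$, one in the upper band $\{m,\ldots,2m-2\}$, or a single one at the peak $m-1$. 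The proof then splits according to which band $\kappa$ lands in, and the hypothesis $\gcd(m,n)=1$ supplies uniqueness via residue arithmetic.

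For Case 1, I would rearrange the equation $\kappa(\beta_1,\beta_2) = m-1$ into $n(\beta_1+1) = m(2n-\beta_2)$. Coprimality of $m$ and $n$ forces $m \mid \beta_1+1$, and combined with $0 \le \beta_1 \le 2m-2$ this yields $\beta_1 = m-1$ and $\beta_2 = n$, proving (1b). Part (1a) then follows by directly solving $0 \le \kappa(m-1,\beta_2) \le 2m-2$: the bounds $n - 1 + \tfrac{1}{m} \le \beta_2 \le n + 1 - \tfrac{1}{m}$ admit only $\beta_2 = n$.

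For Cases 2 and 3, I fix $\beta_1 = j$ in the appropriate range and examine the condition $0 \le \kappa(j,\beta_2) \le m-2$. This rewrites as the requirement that $m\beta_2$ lie in a closed integer interval of length $m-2$, namely $[2mn+1-m-n-nj,\; 2mn-1-n-nj]$. Any such interval contains a multiple of $m$ unless its left endpoint is congruent to $1 \pmod{m}$, and reducing the left endpoint modulo $m$ (again using $\gcd(m,n)=1$) shows this congruence is equivalent to $j \equiv -1 \pmod{m}$, which is excluded in both ranges. A short computation with floors then identifies the unique solution as $\beta_2(j) = 2n - L(j)$, and substitution yields $\kappa(j,\beta_2(j)) = m+n-1+nj-mL(j)$. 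Distinctness as $j$ varies follows from the observation that $\kappa(j_1,\beta_2(j_1)) = \kappa(j_2,\beta_2(j_2))$ forces $n(j_1-j_2) \equiv 0 \pmod{m}$, hence $j_1 = j_2$ by coprimality and range. Since the domain $\{0,\ldots,m-2\}$ and the target $\{0,\ldots,m-2\}$ both have $m-1$ elements, injectivity upgrades to a bijection. Parts (2b) and (3b) then follow from (2a) and (3a) via the shift $\tilde\beta_2(j) = \beta_2(j)+1$, which translates $\kappa$ up by exactly $m$, mapping the lower band bijectively onto the upper one.

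The hard part will be bookkeeping rather than conceptual: verifying the stated ranges of $\beta_2(j)$ and $\tilde\beta_2(j)$ at the endpoints by evaluating $L(0)=1$ and $L(m-2)=n$ (monotonicity of $L$ follows from $n/m < 1$), and checking the degenerate case $m=2$ (forcing $n=1$) where the interval of length $m-2$ collapses to a single integer and must be handled separately. Both are straightforward verifications once the modular framework is in place.
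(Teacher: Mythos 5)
Your proposal is correct and follows essentially the same route as the paper: the window characterization of $D_m$ for (0), the coprimality/residue argument for Case 1, the length-$(m-2)$ integer interval containing a unique multiple of $m$ (with the left-endpoint-$\equiv 1 \pmod m$ obstruction ruled out by $j \not\equiv -1$), injectivity of $j \mapsto \kappa(j,\beta_2(j))$ via $\gcd(m,n)=1$, and the shift $\tilde\beta_2(j)=\beta_2(j)+1$ translating $\kappa$ by $m$ for the (b) parts. You even flag the same degenerate case $m=2$ that the paper treats separately; the only cosmetic difference is that you prove (1b) before (1a).
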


\subsection{Explicit determination of $P_{m,n}(s,t)$}\label{SS:explicit-determination-of-Pmn}

For the remainder of the paper, the quantities $\kappa(j,\beta_2(j))$ and $\kappa(j,\tilde\beta_2(j))$ will be abbreviated as follows:
\begin{subequations}
\begin{align}
\kappa(j) &:= \kappa(j,\beta_2(j)) = m + n - 1 +nj - mL(j), \label{def-of-kappaj} \\
\tilde\kappa(j) &:= \kappa(j,\tilde\beta_2(j)) = 2m + n - 1 +nj - mL(j) = m + \kappa(j). \label{def-of-kappatildej}
\end{align}
\end{subequations}

We first record some properties of $L(j)$ and $\kappa(j)$ that will be crucial later.

\begin{lemma}\label{L:props-of-L-and-kappa}
The functions $L$ and $\kappa$, defined in \eqref{E:def-of-mn-ceiling-func} and \eqref{def-of-kappaj}, respectively, satisfy the following functional identities
\begin{enumerate}
\item $L(j+m) = L(j) + n$.
\item $\kappa(j+m) = \kappa(j)$.
\item For $j \in \{0,1,\cdots,m-2 \}$, $L(j) + L(m-2-j) = n+1$.
\item For $j \in \{0,1,\cdots,m-2 \}$, $\kappa(j) + \kappa(m-2-j) = m-2$.
\end{enumerate}
\end{lemma}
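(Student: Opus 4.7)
The plan is to dispatch (1), (2), and (4) as short manipulations of the definitions and to devote the bulk of the work to (3), the only substantive identity.

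For (1), I would use the basic fact $\lceil \alpha + k\rceil = \lceil \alpha \rceil + k$ for $k \in \Z$. Applied to $L(j+m) = \lceil (1+n(j+1))/m + n \rceil$, this gives $L(j) + n$ immediately. For (2), substituting (1) into $\kappa(j+m) = m+n-1+n(j+m)-mL(j+m)$ causes the extra $nm$ and $-mn$ terms to cancel, leaving $\kappa(j)$. For (4), plugging (3) into the definitions yields
\begin{equation*}
\kappa(j) + \kappa(m-2-j) = 2(m+n-1) + n(m-2) - m(n+1) = m-2.
\end{equation*}

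The crux is (3). I plan to set $a = 1+n(j+1)$ and $b = 1+n(m-1-j)$, so that $L(j) = \lceil a/m \rceil$, $L(m-2-j) = \lceil b/m \rceil$, and crucially $a+b = nm+2$. Writing $a = mq_1 + r_1$ and $b = mq_2 + r_2$ with $r_i \in \{0, 1, \ldots, m-1\}$, the identity reduces to showing $\lceil a/m\rceil + \lceil b/m\rceil = n+1$. The key arithmetic input is that $\gcd(m,n) = 1$ together with $j+1, m-1-j \in \{1, \ldots, m-1\}$ forces $n(j+1) \not\equiv 0 \pmod m$ and $n(m-1-j) \not\equiv 0 \pmod m$, hence $r_1 \neq 1$ and $r_2 \neq 1$. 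From $a+b = nm+2$, we have $r_1 + r_2 \equiv 2 \pmod m$, so in the range $[0, 2m-2]$ the only possibilities are $r_1 + r_2 \in \{0, 2, m+2\}$, with $r_1 + r_2 = 0$ occurring only when $m = 2$ (where $j = 0$, $n$ odd, and both residues vanish).

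In each permissible case the verification is mechanical: when $r_1 = r_2 = 0$, both ceilings equal the quotients, summing to $(a+b)/m = n+1$; when $\{r_1, r_2\} = \{0, 2\}$, exactly one ceiling bumps up by one and the sum is $q_1 + q_2 + 1 = n+1$; when $r_1 + r_2 = m+2$ with both residues positive, both ceilings bump up and the sum is $q_1 + q_2 + 2 = (n-1) + 2 = n+1$. The main (mild) obstacle is recognizing that the residue constraint $r_i \neq 1$ is exactly what rules out the single configuration $(r_1, r_2) = (1, 1)$ which would otherwise be a valid solution of the congruence with sum $2$ but would not yield $n+1$ under the ceiling sum; this is where coprimality of $m$ and $n$ enters the proof in an essential way.
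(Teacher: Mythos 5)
Your proof is correct, and all four parts check out: (1), (2), (4) are the same routine manipulations the paper uses, and your argument for (3) is sound — in particular you correctly identify that coprimality is needed precisely to exclude the residue configuration $(r_1,r_2)=(1,1)$. Your route through (3) is organized differently from the paper's, though the arithmetic engine is identical. You work symmetrically with the two numbers $a=1+n(j+1)$ and $b=1+n(m-1-j)$, use $a+b=nm+2$, and run a case analysis on the possible values of $r_1+r_2 \in \{0,2,m+2\}$. The paper instead first applies the periodicity identity (1) to replace $L(m-2-j)$ by $L(-2-j)+n$, reducing (3) to the single-variable claim $L(j)+L(-2-j)=1$; it then uses $\lceil -x\rceil=-\lfloor x\rfloor$ to rewrite this as $\left\lceil \frac{n(j+1)+1}{m}\right\rceil - \left\lfloor \frac{n(j+1)-1}{m}\right\rfloor$, and a single application of the division algorithm $n(j+1)=qm+r$ with $1\le r\le m-1$ (the same coprimality input you use) evaluates the two terms as $q+1$ and $q$ with no case split. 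The paper's version is slightly more economical because only one residue needs to be controlled; yours is more self-contained in that it does not route (3) through (1), at the cost of tracking three residue configurations. Either is acceptable.
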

\begin{proof}

(1) and (2) follow from trivial computation.
For (3), observe that (1) implies 
\[
L(j) + L(m-2-j) = n + L(j) + L(-2-j).
\]
By definition, $L(j) + L(-2-j) =$
\begin{equation}\label{E:int-step2-sum-of-Ls}
=\left\lceil \frac{1+n(j+1)}{m} \right\rceil + \left\lceil \frac{1-n(j+1)}{m} \right\rceil
= \left\lceil \frac{n(j+1)+1}{m} \right\rceil -\left\lfloor \frac{n(j+1) - 1}{m} \right\rfloor.
\end{equation}

For $j \in \{0,1,\cdots,m-2 \}$, write $n(j+1) = q(j) m + r(j)$, where $q(j) \in \Z$ is the quotient and $r(j)$ is the remainder. 
We claim $1 \le r(j) \le m-1$ for this range of $j$.
Indeed, since $\gcd(m,n) = 1$, $r(j)$ assumes each value in the set $\{1,2,\cdots,m-1 \}$ exactly once. If not, we could find $0 \le j_1 < j_2 \le m-2$ with $r(j_1) = r(j_2)$, which would imply $n(j_2-j_1) = (q(j_2)-q(j_1))m \equiv 0 \mod m$. 
But this is impossible since $0 < j_2 - j_1 \le m-2$.
Thus,
\begin{equation*}
\frac{n(j+1)+1}{m} = q(j) + \frac{r(j)+1}{m}, \qquad \frac{n(j+1)-1}{m} = q(j) + \frac{r(j)-1}{m},
\end{equation*}
and so
\begin{align*}
\left\lceil \frac{n(j+1)+1}{m} \right\rceil = q(j) + 1, \qquad \left\lfloor \frac{n(j+1)-1}{m} \right\rfloor = q(j).
\end{align*}
Now by \eqref{E:int-step2-sum-of-Ls} we see that for $j \in \{0,1,\cdots,m-2 \}$, $L(-2-j) + L(j) = 1$, proving (3).

For (4), notice that
\begin{align}
\kappa(m-2-j) + \kappa(j) &= 2m + 2n - 2 + n(m-2-j) - m L(m-2-j) + nj - mL(j) \notag \\
&= 2m - 2 + nm - m(L(m-2-j) + L(j)) \notag \\ 
&= 2m - 2 + nm  - m(n+1) \label{E:int-step1-sum-of-Ls} = m-2,
\end{align}
completing the proof.
\end{proof}

We are ready to eliminate the $D_m$ function and produce a formula for the numerator of the Bergman kernel in terms of expressions appearing in Proposition \ref{P:Finding-effective-coefficients}.

\begin{proof}[Proof of Theorem \ref{T:Bergman-kernel-formula}]
We use Proposition \ref{P:Finding-effective-coefficients} to decompose and simplify the numerator polynomial of the Bergman kernel given in Theorem \ref{T:Almugh-formula}:
\begin{equation*}
P_{m,n}(s,t) = \sum_{\beta\in \Z^2} D_m(\beta_1) D_m(m\beta_2+n\beta_2 + m + n -1 -2mn) s^{\beta_1} t^{\beta_2}.
\end{equation*}

\begin{itemize}[wide]
    
\item Proposition \ref{P:Finding-effective-coefficients} parts (1a) and (1b) together show that the two possible options in case 1 ($\beta_1 = m-1$ or $\kappa(\beta_1,\beta_2) = m-1$) in fact both correspond to the same, uniquely determined $\beta$-value: $(\beta_1,\beta_2) = (m-1,n)$.
This corresponds to a single monomial term in the sum above, which we now give a name.
Recalling the definition of $D_m$ in \eqref{E:Dbasic_def}, define
\begin{align*}
p_0(s,t) &:= D_m(\beta_1) D_m( \kappa(\beta_1,\beta_2) ) s^{\beta_1} t^{\beta_2} 
\Big|_{(\beta_1,\beta_2) = (m-1,n)} \\
&=D_m(m-1) D_m(m-1) s^{m-1} t^n \\
&= m^2 s^{m-1} t^n.
\end{align*}

\item Part (2a) considers $\beta_1 = j \in \{0,1,\cdots,m-2 \}$ and shows the integer $\beta_2(j) = 2n - L(j)$ is uniquely determined so that $\kappa(j) := \kappa(j,\beta_2(j)) \in \{0,1,\cdots,m-2 \}$. 
Recall the definition of $D_m$ in \eqref{E:Dbasic_def}, and give a name to the polynomial corresponding to these indices:
\begin{align*}
p_1(s,t) 
&:= \sum_{j=0}^{m-2} D_m(j) D_m(\kappa(j)) s^j t^{\beta_2(j)} = \sum_{j=0}^{m-2} (j+1) (\kappa(j)+1) s^j t^{2n-L(j)}.
\end{align*}

\item Part (2b) considers $\beta_1 = j \in \{0,1,\cdots,m-2 \}$ and shows the integer $\tilde\beta_2(j) = 2n + 1 - L(j)$ is uniquely determined so that $\tilde\kappa(j) := \kappa(j,\tilde\beta_2(j)) = m + \kappa(j) \in \{m,m+1,\cdots,2m-2 \}$. 
Recall the definition of $D_m$ in \eqref{E:Dbasic_def}, and give a name to the polynomial corresponding to these indices:
\begin{align*}
p_2(s,t) 
&:= \sum_{j=0}^{m-2} D_m(j) D_m(m+\kappa(j)) s^j t^{\tilde\beta_2(j)} \\
&= \sum_{j=0}^{m-2} (j+1) (m-\kappa(j)-1) s^j t^{2n+1-L(j)}.
\end{align*}

\item Part (3a) considers $\beta_1 = j \in \{m,m+1,\cdots,2m-2 \}$ and shows the integer $\beta_2(j) = 2n - L(j)$ is uniquely determined so that $\kappa(j) := \kappa(j,\beta_2(j)) \in \{0,1,\cdots,m-2 \}$. 
Recall the definition of $D_m$ in \eqref{E:Dbasic_def}, and give a name to the polynomial corresponding to these indices:
\begin{align*}
p_3(s,t) 
&:= \sum_{j=m}^{2m-2} D_m(j) D_m(\kappa(j)) s^j t^{\beta_2(j)} \\
&= \sum_{j=m}^{2m-2} (2m-j-1) (\kappa(j)+1) s^j t^{2n-L(j)} = \sum_{j=0}^{m-2} (m-j-1) (\kappa(j)+1) s^{j+m} t^{n-L(j)}.
\end{align*}
In the last step, we re-indexed the sum using Lemma \ref{L:props-of-L-and-kappa} to see that $\kappa(j+m)=\kappa(j)$ and $L(j+m) = L(j)-n$.

\item Part (3b) considers $\beta_1 = j \in \{m,m+1,\cdots,2m-2 \}$ and shows the integer $\tilde\beta_2(j) = 2n + 1 - L(j)$ is uniquely determined so that $\tilde\kappa(j) := \kappa(j,\tilde\beta_2(j)) = m + \kappa(j) \in \{m,m+1,\cdots,2m-2 \}$. 
Recall the definition of $D_m$ in \eqref{E:Dbasic_def}, and give a name to the polynomial corresponding to these indices:
\begin{align*}
p_4(s,t) 
&:= \sum_{j=m}^{2m-2} D_m(j) D_m(m+\kappa(j)) s^j t^{\tilde\beta_2(j)} \\
&= \sum_{j=m}^{2m-2} (2m-j-1) (m-\kappa(j)-1) s^j t^{2n+1-L(j)} \\
&= \sum_{j=0}^{m-2} (m-j-1) (m-\kappa(j)-1) s^{j+m} t^{n+1-L(j)}.
\end{align*}
\end{itemize}
As above, we re-indexed the last sum using Lemma \ref{L:props-of-L-and-kappa}.
By Proposition \ref{P:Finding-effective-coefficients} we have exhausted all indices $(\beta_1,\beta_2)$ corresponding to non-zero coefficients in the sum defining $P_{m,n}(s,t)$.
The proof is now complete.
\end{proof}


\subsection{Palindromic polynomials}\label{SS:palindromic-polys}

In this section we restrict the numerator polynomial of the Bergman kernel to the variety $\{s = t\}$.
The function $s \mapsto P_{m,n}(s,s)$ has a root of order $2n-1$ at $s=0$ (this is shown below), which suggests the definition of the following polynomial
\begin{align}\label{E:def-of-Qmn}
Q_{m,n}(s) := s^{1-2n} P_{m,n}(s,s).
\end{align}

Observe that $Q_{m,n}$ has roots in the punctured unit disc $\D^* \subset \C$ if and only if $P_{m,n}$ has roots in the punctured disc $\Delta^* = \{(s,s): s \in \D^* \} \subset \h_{m/n}$.

The main result in this section is that $Q_{m,n}$ is a palindromic polynomial of degree $2m-2n$.
Recall that a degree $2k$ polynomial 
\begin{equation}\label{E:degree2k-poly}
f(s) = \sum_{j=0}^{2k} \alpha_j s^j
\end{equation}
is said to be palindromic when its coefficients satisfy $\alpha_j = \alpha_{2k-j}$ for $0\le j \le k$.

With $f$ defined as in \eqref{E:degree2k-poly}, it can be seen to be  palindromic if and only if it satisfies
\begin{equation}\label{E:Palindromic-condition}
f(s) = s^{2k} f(s^{-1}).
\end{equation}

Equation \eqref{E:Palindromic-condition} shows that $s \in \C\backslash \{0\}$ is a root of the palindromic polynomial $f$ if and only if $s^{-1}$ is also a root.
This immediately implies the following:

\begin{proposition}\label{P:Palindromic-roots}
A palindromic polynomial $f$ of degree $2k$ has an even number of roots (counting multiplicities) away from the unit circle, with precisely half belonging to $\D^*$.
In particular, $f$ has roots in $\D^*$ unless all its roots lie on the unit circle.
If $f$ is non-vanishing on the circle, it has exactly $k$ roots inside $\D^*$.
\end{proposition}

Mirroring the decomposition given in \eqref{E:Pmn-as-sum-of-5-terms}, we express $Q_{m,n}$ as the sum
\begin{equation}\label{E:Qmn-as-sum-of-5-terms}
Q_{m,n}(s) = q_0(s) + q_1(s) + q_2(s) + q_3(s) + q_4(s),
\end{equation}
where each 
\[
q_j(s) := s^{1-2n} p_j(s,s).
\]
The formulas for the polynomials $p_j$ given in Theorem \ref{T:Bergman-kernel-formula} thus yield
{\allowdisplaybreaks
\begin{subequations}\label{E:def-of-qjs}
\begin{align}
&q_0(s) = m^2 s^{m-n}, \label{E:def-of-q0} \\
&q_1(s) = \sum_{j=0}^{m-2} (j+1)(\kappa(j)+1) s^{j-L(j)+1}, \label{E:def-of-q1} \\
&q_2(s) = \sum_{j=0}^{m-2} (j+1)(m-\kappa(j)-1) s^{j-L(j)+2}, \label{E:def-of-q2} \\
&q_3(s) = \sum_{j=0}^{m-2} (m-j-1)(\kappa(j)+1) s^{j-L(j)+m-n+1}, \label{E:def-of-q3} \\
&q_4(s) = \sum_{j=0}^{m-2} (m-j-1)(m-\kappa(j)-1) s^{j-L(j)+m-n+2}. \label{E:def-of-q4} 
\end{align}
\end{subequations}
}

\begin{proposition}\label{P:Qmn-palindromic}
The $q_j$ are polynomials with positive coefficients, satisfying
\begin{subequations}\label{E:qj-functional-ids}
\begin{align}
&s^{2m-2n}q_0(s^{-1}) = q_0(s), \label{E:q0-q0}\\
&s^{2m-2n}q_1(s^{-1}) = q_4(s), \label{E:q1-q4}\\
&s^{2m-2n}q_2(s^{-1}) = q_3(s). \label{E:q2-q3}
\end{align}
Consequently, $Q_{m,n}$ is a palindromic polynomial of degree $2m-2n$ with positive coefficients.
\end{subequations}
\end{proposition}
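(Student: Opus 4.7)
The plan is to verify the three functional identities \eqref{E:q0-q0}--\eqref{E:q2-q3} by direct computation from the explicit formulas in \eqref{E:def-of-qjs}, using the reflective identities for $L$ and $\kappa$ provided by parts $(3)$ and $(4)$ of Lemma \ref{L:props-of-L-and-kappa}. First I would dispose of positivity: by part $(2a)$ of Proposition \ref{P:Finding-effective-coefficients}, $\kappa(j) \in \{0,1,\dots,m-2\}$ for every $j \in \{0,1,\dots,m-2\}$, so all four factors $(j+1)$, $(m-j-1)$, $(\kappa(j)+1)$, $(m-\kappa(j)-1)$ appearing in \eqref{E:def-of-q1}--\eqref{E:def-of-q4} are strictly positive. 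The identity \eqref{E:q0-q0} is immediate from $q_0(s)=m^2 s^{m-n}$.

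The heart of the argument is \eqref{E:q1-q4} and \eqref{E:q2-q3}. For \eqref{E:q1-q4} I would start from
\begin{equation*}
s^{2m-2n} q_1(s^{-1}) = \sum_{j=0}^{m-2} (j+1)(\kappa(j)+1)\, s^{2m-2n-1-j+L(j)}
\end{equation*}
and re-index by $j \mapsto m-2-j$. Lemma \ref{L:props-of-L-and-kappa}(3)--(4) then transforms
\begin{equation*}
L(m-2-j) = n+1-L(j), \qquad \kappa(m-2-j)+1 = m-1-\kappa(j),
\end{equation*}
while $(m-2-j)+1 = m-1-j$, and the exponent becomes $j-L(j)+m-n+2$. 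The result matches \eqref{E:def-of-q4} term-by-term. The verification of \eqref{E:q2-q3} is the same argument with the exponent shifted by one, swapping the roles of the ``$+1$'' and ``$m-\kappa(j)-1$'' factors.

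To deduce the consequence, I would note that the three identities \eqref{E:q0-q0}--\eqref{E:q2-q3} together with their ``inverses'' $s^{2m-2n}q_3(s^{-1})=q_2(s)$ and $s^{2m-2n}q_4(s^{-1})=q_1(s)$ (obtained by replacing $s$ with $s^{-1}$ in \eqref{E:q2-q3} and \eqref{E:q1-q4} and multiplying by $s^{2m-2n}$) yield, upon summing \eqref{E:Qmn-as-sum-of-5-terms},
\begin{equation*}
s^{2m-2n} Q_{m,n}(s^{-1}) = q_0(s) + q_4(s) + q_3(s) + q_2(s) + q_1(s) = Q_{m,n}(s),
\end{equation*}
which is the palindromic condition \eqref{E:Palindromic-condition}. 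Finally I would pin down the degree: the largest exponent appearing anywhere in \eqref{E:def-of-qjs} is attained by $q_4$ at $j=m-2$, where $(m-2)-L(m-2)+m-n+2 = 2m-2n$ since $L(m-2)=n$; and the smallest exponent is attained by $q_1$ at $j=0$, where $0-L(0)+1 = 0$ since $L(0)=1$. Positivity of the coefficients guarantees no cancellation at the extremes, so $Q_{m,n}$ has degree exactly $2m-2n$ and a nonzero constant term, completing the proof.

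The only delicate step is bookkeeping in the re-indexing; in particular one must check the $L$-identity in Lemma \ref{L:props-of-L-and-kappa}(3) holds for $j$ in the full range $\{0,\dots,m-2\}$ (not just generically), so that each transformed summand lands in the allowed index set for $q_4$ (respectively $q_3$). Everything else is mechanical substitution.
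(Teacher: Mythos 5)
Your proposal is correct and follows essentially the same route as the paper: positivity from the range of $\kappa(j)$, the re-indexing $j \mapsto m-2-j$ combined with Lemma \ref{L:props-of-L-and-kappa}(3)--(4) for the identities \eqref{E:q1-q4} and \eqref{E:q2-q3}, and summation to get the palindromic property. The one step you assert without justification is that the smallest exponent in $q_1$ occurs at $j=0$ (equivalently, that all exponents $j-L(j)+1$ are non-negative, which is needed even to know the $q_j$ are polynomials); the paper closes this by showing $E(j)=j-L(j)+1$ is non-decreasing, since consecutive ceilings $\left\lceil \frac{1+n(j+2)}{m}\right\rceil - \left\lceil \frac{1+n(j+1)}{m}\right\rceil \in \{0,1\}$ because $\frac{n}{m}<1$, and then computing $E(0)=0$. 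With that observation added, your argument is complete.
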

\begin{proof}
Since $\frac{m}{n}>1$ is a rational number in lowest terms, we emphasize that $m \ge 2$.

Each coefficient inside the sums \eqref{E:def-of-q1} through \eqref{E:def-of-q4} is seen to be positive by recalling Proposition \ref{P:Finding-effective-coefficients} part $(2a)$, that $\kappa(j)$ assumes each value in $\{0,1,\cdots,m-2\}$ exactly once as $j$ ranges in $\{0,1,\cdots,m-2\}$. (Recall that in \eqref{def-of-kappaj} we simplified notation by writing $\kappa(j)$ for what was originally defined as $\kappa(j,\beta_2(j))$).

Next we show that each $q_j$ is a polynomial, i.e., that only non-negative powers of $s$ appear in each respective sum.
Clearly $q_0$ is a polynomial since we assume $m>n$.
Also notice that in the sums defining $q_1,q_2,q_3,q_4$, the exponent $j-L(j)+1$ appearing on $s$ in $q_1$ is smaller than the respective exponents of $s$ in $q_2,q_3,q_4$.
We write this exponent as
\begin{equation}\label{E:def-of-E(j)}
E(j) := j-L(j)+1.
\end{equation}
We now show this is non-negative for all $j = 0, 1, \cdots, m-2$.

First, we claim $E(j)$ is a non-decreasing function on the integers, i.e., $E(j+1) - E(j) \ge 0$. 
To see this, observe
\begin{align*}
E(j+1) - E(j) = 1 - \left( \left\lceil \frac{1+n(j+2)}{m} \right\rceil - \left\lceil \frac{1+n(j+1)}{m} \right\rceil \right)
\end{align*}
Notice that the difference of the two terms {\em inside the ceiling brackets} above is
\begin{align*}
\frac{1+n(j+2)}{m} - \frac{1+n(j+1)}{m} = \frac{n}{m} < 1,
\end{align*}
meaning that for any $j$, the difference in parentheses above takes one of two possible values:
\begin{align*}
\left\lceil \frac{1+n(j+2)}{m} \right\rceil - \left\lceil \frac{1+n(j+1)}{m} \right\rceil \in \{0,1\}.
\end{align*}
Thus for each $j$, $E(j+1) - E(j) \ge 0$.
To conclude that $E(j)$ is non-negative for $j \in  \{0,1,\cdots,m-2\}$, it is now sufficient to check the value at $j=0$:
\begin{align}
E(0) = -L(0) + 1 = -\left\lceil \frac{1+n}{m} \right\rceil + 1 = 0,
\end{align}
since $0 < 1+n \le m$.
This shows that $q_1$ is a polynomial with a non-zero constant term.
Consequently $q_2,q_3,q_4$ are also polynomials.

Now we verify the identities in \eqref{E:qj-functional-ids}.
First,
\begin{equation*}
s^{2m-2n}q_0(s^{-1}) = s^{2m-2n} m^2 s^{n-m} = m^2 s^{m-n} = q_0(s),
\end{equation*}
confirming \eqref{E:q0-q0}.
Next consider
\begin{align}
s^{2m-2n}q_1(s^{-1}) = \sum_{j=0}^{m-2} (j+1)(\kappa(j)+1) s^{2m-2n+L(j)-j-1}, \label{E:q1-id-step1}
\end{align}
and re-index the sum by setting $j = m-2-i$. 
By Lemma \ref{L:props-of-L-and-kappa}, we have that $\kappa(m-2-i) = m-2-\kappa(i)$ and $L(m-2-i) = n + 1 - L(i)$, and so
\begin{align*}
\eqref{E:q1-id-step1} 
&= \sum_{i=0}^{m-2} (m-1-i)(\kappa(m-2-i)+1) s^{2m-2n+L(m-2-i)-m+2+i-1} \\
&= \sum_{i=0}^{m-2} (m-1-i)(m-1-\kappa(i)) s^{i-L(i)+m-n+2} = q_4(s),
\end{align*}
confirming \eqref{E:q1-q4}.
Following the same line of reasoning, now consider
\begin{align*}
s^{2m-2n}q_2(s^{-1}) &= \sum_{j=0}^{m-2} (j+1)(m-\kappa(j)-1) s^{2m-2n+L(j)-j-2} \\
&= \sum_{i=0}^{m-2} (m-1-i)(m-\kappa(m-2-i)-1) s^{2m-2n+L(m-2-i)-m+2+i-2} \\
&= \sum_{i=0}^{m-2} (m-1-i)(\kappa(i)+1) s^{i-L(i)+m-n+1} = q_3(s),
\end{align*}
confirming \eqref{E:q2-q3}.
Finally, we show $Q_{m,n}(s)$ is palindromic of degree $2m-2n$:
\begin{align*}
s^{2m-2n}Q_{m,n}(s^{-1}) &= s^{2m-2n} \big( q_0(s^{-1}) +  q_1(s^{-1}) +  q_2(s^{-1}) +  q_3(s^{-1}) + q_4(s^{-1}) \big) \\
&= q_0(s) + q_4(s) + q_3(s) + q_2(s) + q_1(s) \\
&= Q_{m,n}(s),
\end{align*}
as \eqref{E:q1-q4} and \eqref{E:q2-q3} also imply $s^{2m-2n}q_4(s^{-1}) = q_1(s)$ and $s^{2m-2n}q_3(s^{-1}) = q_2(s)$.
\end{proof}


\section{The Lu Qi-Keng problem}\label{S:zeros-of-the-Bergman-kernel}

We use the work of the previous section to prove for infinitely many rationals $\gamma = \frac{m}{n} \in (1,2)$ the existence of zeros of the Bergman kernels on $\h_\gamma$.

\subsection{Case $m-n=1$}\label{SS:k=1}
$\big(\frac{m}{n} = \frac{2}{1}, \frac{3}{2}, \frac{4}{3}, \frac{5}{4}, \frac{6}{5}, \frac{7}{6}, \cdots \big)$.
 
\begin{proposition}\label{P:k=1-form-of-poly}
Let us write $(m,n) = (\ell+1, \ell)$, $\ell \in \Z^+$. 
The polynomial $Q_{\ell+1,\ell}$ takes the form
\begin{align*}
Q_{\ell+1,\ell}(s) &= \alpha_0(\ell) s^2 + \alpha_1(\ell) s + \alpha_0(\ell),
\end{align*}
where
\begin{align*}
\alpha_0(\ell) &= \tfrac{1}{6}\ell(\ell+1)(\ell+2), \\
\alpha_1(\ell) &= \tfrac{1}{3}(\ell+1)(2\ell^2+4\ell+3).
\end{align*}
\end{proposition}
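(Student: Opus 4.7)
The plan is to specialize Theorem~\ref{T:Bergman-kernel-formula} and the formulas \eqref{E:def-of-qjs} to the pair $(m,n) = (\ell+1, \ell)$, reducing everything to two elementary power sums. First, I would compute $L(j)$ and $\kappa(j)$ explicitly. Writing
\[
\frac{1 + n(j+1)}{m} = \frac{1 + \ell(j+1)}{\ell+1} = 1 + j - \frac{j}{\ell+1},
\]
and noting $0 \le \tfrac{j}{\ell+1} < 1$ for $j \in \{0,1,\ldots,\ell-1\} = \{0,\ldots,m-2\}$, one concludes $L(j) = j+1$ throughout this range. Substitution into \eqref{def-of-kappaj} then gives $\kappa(j) = \ell - j - 1$, a reversal of the index $j$ on $\{0,\ldots,\ell-1\}$.

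Next I would substitute these values into the decomposition \eqref{E:Qmn-as-sum-of-5-terms}, \eqref{E:def-of-qjs}. Because $j - L(j) = -1$ for every $j$ in the summation range (and because $m - n = 1$), the exponents inside $q_0, q_1, q_2, q_3, q_4$ collapse to $1, 0, 1, 1, 2$ respectively. This is consistent with $\deg Q_{m,n} = 2(m-n) = 2$ from Proposition~\ref{P:Qmn-palindromic}. Consequently the constant term $\alpha_0$ is read off from $q_1$ alone, the linear term $\alpha_1$ from the combination $q_0 + q_2 + q_3$, and the $s^2$ coefficient (which must again equal $\alpha_0$ by the palindromic property) from $q_4$ alone.

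The final step is arithmetic. For the constant term,
\[
\alpha_0 = \sum_{j=0}^{\ell-1} (j+1)(\kappa(j)+1) = \sum_{j=0}^{\ell-1} (j+1)(\ell-j),
\]
which I would evaluate by reindexing $i = j+1$ and applying $\sum_{i=1}^{\ell} i = \tfrac{\ell(\ell+1)}{2}$ and $\sum_{i=1}^{\ell} i^2 = \tfrac{\ell(\ell+1)(2\ell+1)}{6}$ to obtain $\tfrac{\ell(\ell+1)(\ell+2)}{6}$. For the linear coefficient I would assemble
\[
\alpha_1 = (\ell+1)^2 + \sum_{j=0}^{\ell-1} (j+1)^2 + \sum_{j=0}^{\ell-1} (\ell-j)^2 = (\ell+1)^2 + \frac{\ell(\ell+1)(2\ell+1)}{3},
\]
noting that $q_2$ and $q_3$ contribute the same sum $\sum i^2$ (a redundancy already implicit in \eqref{E:q2-q3}, and consistent with $q_0$ being its own palindromic partner), and factor out $\tfrac{\ell+1}{3}$ to obtain the stated $\tfrac{(\ell+1)(2\ell^2+4\ell+3)}{3}$.

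I do not anticipate any genuine obstacle. The only points requiring care are the boundary verification $L(0)=1$ (where the ceiling is exact) and correctly bookkeeping which of the five $q_i$ contributes to which coefficient of $Q_{\ell+1,\ell}$; everything else is a direct application of Theorem~\ref{T:Bergman-kernel-formula} together with two standard power-sum identities.
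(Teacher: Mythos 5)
Your proposal is correct and follows essentially the same route as the paper's proof: specialize $L(j)=j+1$ and $\kappa(j)=\ell-j-1$, substitute into the five formulas \eqref{E:def-of-q0}--\eqref{E:def-of-q4} so the exponents collapse to $1,0,1,1,2$, and evaluate the resulting power sums. All computations check out, including the observation that $q_2$ and $q_3$ contribute equal sums and that $q_1$ and $q_4$ give matching outer coefficients as the palindromic symmetry requires.
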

\begin{proof}
For $(m,n) = (\ell+1,\ell)$ let us first calculate the quantities $L(j)$ and $\kappa(j)$ from \eqref{E:def-of-mn-ceiling-func-kappaj-intro}.
For the range of $j$ appearing in the sums defining $q_1,q_2,q_3,q_4$ $(0 \le j \le m-2 = \ell - 1)$, we have
\begin{align}\label{E:L(j)-k=1}
L(j) = \left\lceil \frac{1+\ell(j+1)}{\ell+1} \right\rceil 
= j+1.
\end{align}
A quick calculation now shows $\kappa(j) = \ell - j - 1$ and thus equations \eqref{E:def-of-q0}--\eqref{E:def-of-q4} give
\begin{align*}
q_0(s) = (\ell+1)^2 s, \qquad q_1(s) = \sum_{j=0}^{\ell-1} (j+1)(\ell-j), \qquad q_2(s) = s \sum_{j=0}^{\ell-1} (j+1)^2
\end{align*}
\begin{align*}
q_3(s) = s \sum_{j=0}^{\ell-1} (\ell-j)^2, \qquad q_4(s) = s^{2} \sum_{j=0}^{\ell-1} (\ell-j)(j+1).
\end{align*}
Now add these polynomials (in the following computation, it is helpful to reverse the order of summation in $q_3$):
\begin{align*}
Q_{\ell+1,\ell}(s) 
&= q_0(s) + q_1(s) + q_2(s) + q_3(s) + q_4(s) \notag \\
&= (1 + s^{2}) \left( \sum_{j=0}^{\ell-1} (j+1)(\ell-j) \right) + s\left( 2\sum_{j=0}^{\ell-1} (j+1)^2 + (\ell+1)^2 \right) \notag \\
&= (1 + s^{2}) \alpha_0(\ell) + s\alpha_1(\ell),
\end{align*}
where the formulas for $\alpha_0(\ell)$ and $\alpha_1(\ell)$ as cubics in $\ell$ follow from well known sums of powers formulas (see e.g. \cite{GrKuPa94}).
\end{proof}

\begin{proposition}\label{P:k=1-unique-root}
$Q_{\ell+1,\ell}(s)$ has a unique root inside the punctured unit disc $\D^*$.
\end{proposition}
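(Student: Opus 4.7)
The plan is to exploit the fact that $Q_{\ell+1,\ell}$ is a degree-$2$ palindromic polynomial with positive coefficients, so everything reduces to an elementary quadratic analysis. Write
\[
Q_{\ell+1,\ell}(s) = \alpha_0(\ell)\,s^2 + \alpha_1(\ell)\,s + \alpha_0(\ell),
\]
with $\alpha_0(\ell), \alpha_1(\ell)>0$ as given in Proposition \ref{P:k=1-form-of-poly}. Since the polynomial is palindromic of degree $2$, its two roots $r_1, r_2 \in \C$ satisfy $r_1 r_2 = 1$. The roots are also real: the coefficients are positive, so any non-real root would force its conjugate to also be a root, producing $|r_1|=|r_2|=1$; then the product being $1$ and the sum being $-\alpha_1/\alpha_0 < 0$ would force both roots to equal $-1$, contradicting $Q_{\ell+1,\ell}(-1) = 2\alpha_0(\ell) - \alpha_1(\ell)$ being nonzero (see below).

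The key step is therefore to check the discriminant $\alpha_1(\ell)^2 - 4\alpha_0(\ell)^2$ is strictly positive, equivalently, that $\alpha_1(\ell) > 2\alpha_0(\ell)$. Using Proposition \ref{P:k=1-form-of-poly}, compute
\[
\alpha_1(\ell) - 2\alpha_0(\ell) = \tfrac{1}{3}(\ell+1)(2\ell^2+4\ell+3) - \tfrac{1}{3}\ell(\ell+1)(\ell+2) = \tfrac{1}{3}(\ell+1)(\ell^2+2\ell+3),
\]
which is strictly positive for every $\ell \in \Z^+$. Consequently the two roots $r_1, r_2$ are real, negative, and distinct, with $r_1 r_2 = 1$. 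Hence exactly one lies in $(-1,0)$ and the other in $(-\infty,-1)$. Since $Q_{\ell+1,\ell}(0) = \alpha_0(\ell)>0$, neither root is $0$, so the root in $(-1,0)$ lies in $\D^*$.

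I do not expect any serious obstacle here: the hardest step is the algebraic verification that $\alpha_1 > 2\alpha_0$, which is a one-line manipulation. Alternatively, one could appeal directly to Proposition \ref{P:Palindromic-roots}: a palindromic quadratic either has both roots on the unit circle or exactly one root in $\D^*$; evaluating $Q_{\ell+1,\ell}$ at $s=-1$ (the only possible unit-circle root given positive coefficients) gives $2\alpha_0(\ell)-\alpha_1(\ell) = -\tfrac{1}{3}(\ell+1)(\ell^2+2\ell+3) \neq 0$, ruling out the first alternative and yielding the claim.
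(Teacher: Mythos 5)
Your proposal is correct and follows essentially the same route as the paper: verify $\alpha_1(\ell) > 2\alpha_0(\ell)$ so the discriminant is positive, then use the product of the roots being $1$ (palindromy) together with positivity of the coefficients to place exactly one negative real root in $\D^*$ and its reciprocal outside $\ol{\D}$. The preliminary paragraph arguing against non-real roots is redundant once the discriminant is checked, but it does no harm.
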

\begin{proof} 
We factor
\begin{align}
Q_{\ell+1,\ell}(s) 
&= \alpha_0(\ell) s^2 + \alpha_1(\ell) s + \alpha_0(\ell) \notag \\
&= \alpha_0(\ell) \big(s-r_1(\ell)\big) \big(s-r_2(\ell)\big), \label{E:Q-quadratic-factored}
\end{align}
where $r_1(\ell)$ and $r_2(\ell)$ denote the two roots.
Observe that
\begin{equation}
\alpha_1(\ell) = \tfrac{1}{3}(\ell+1)(2\ell^2+4\ell+3)
>
\tfrac{1}{3}(\ell+1)(\ell^2+2\ell) = 2\alpha_0(\ell),
\end{equation}
which shows the roots $r_1(\ell)$ and $r_2(\ell)$ are real-valued and distinct.
Also observe from \eqref{E:Q-quadratic-factored} that $r_1(\ell) \cdot r_2(\ell) = 1$, which implies the roots have the same sign.
Thus, one of them (say $r_1(\ell)$) lies inside of $\D^*$ while $r_2(\ell)$ lies outside of $\ol{\D}$.
(It is also clear that the roots must be negative, as the coefficient functions $\alpha_0(\ell)$ and $\alpha_1(\ell)$ are always positive.)
\end{proof}

\begin{remark}\label{R:k=1_Limits-of-roots}
We can also use the quadratic formula to find the above roots explicitly:
\begin{equation*}
r_1(\ell) = -\left( \frac{2\ell^2+4\ell+3}{\ell^2+2\ell} \right) + \sqrt{ \left(\frac{2\ell^2+4\ell+3}{\ell^2+2\ell} \right)^2 - 1 }.
\end{equation*}
Now observe that as $\ell \to \infty$, these roots accumulate at a point inside the punctured disc:
\[
\lim_{\ell \to \infty} r_1(\ell) = -2 + \sqrt{3} \in \D^*.
\]
\end{remark}

\begin{corollary}\label{C:LQK-k=1}
The Bergman kernel of $\h_{\frac{\ell+1}{\ell}}$ has zeros inside $\h_{\frac{\ell+1}{\ell}} \times \h_{\frac{\ell+1}{\ell}}$ for every $\ell \in \Z^+$.
\end{corollary}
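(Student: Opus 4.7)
The plan is to combine Proposition \ref{P:k=1-unique-root} with the zero-detection framework developed in Section \ref{S:Bergman-kernel-Reinhardt-domains}. Proposition \ref{P:k=1-unique-root} supplies a single root $s_0 := r_1(\ell) \in \D^*$ of the palindromic polynomial $Q_{\ell+1,\ell}$. By the definition \eqref{E:def-of-Qmn} together with the correspondence explained at the top of Section \ref{SS:palindromic-polys}, this yields a zero $(s_0,s_0) \in \Delta^*$ of the numerator polynomial $P_{\ell+1,\ell}(s,t)$ of the Bergman kernel formula \eqref{E:Bergman-kernel-formula-intro}. I will then lift this to a genuine zero of the full kernel using the commutative diagram of Corollary \ref{C:comm-diag} and the equivalence of zero sets in Corollary \ref{C:LQK-Reinhardt-shadow}.

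The concrete steps run as follows. First, verify that the point $(s_0,s_0)$ lies in $\h_{(\ell+1)/\ell}$: since $s_0 \in \D^*$ and $\tfrac{\ell+1}{\ell} > 1$, the inequalities $|s_0|^{(\ell+1)/\ell} < |s_0| < 1$ hold. Proposition \ref{P:gen-Hartogs-are-their-own-Reinhardt-powers} identifies $\h_\gamma^{(2)} = \h_\gamma$, so $(s_0,s_0)$ is an interior point of the natural domain of the holomorphic factor $k_{(\ell+1)/\ell}$. Second, confirm that the denominator $(1-t)^2(t^n-s^m)^2$ of \eqref{E:Bergman-kernel-formula-intro} does not also vanish there: with $(m,n)=(\ell+1,\ell)$, one factors $t^n - s^m\big|_{(s_0,s_0)} = s_0^\ell(1-s_0)$, so both factors in the denominator reduce to products of powers of $s_0$ and $(1-s_0)$. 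Remark \ref{R:k=1_Limits-of-roots} shows $s_0 \in (-1,0)$, hence $s_0 \neq 0$ and $s_0 \neq 1$, so the denominator is nonzero at $(s_0,s_0)$. Since $P_{\ell+1,\ell}(s_0,s_0) = s_0^{2\ell-1}Q_{\ell+1,\ell}(s_0) = 0$ while the denominator does not vanish, one concludes $k_{(\ell+1)/\ell}(s_0,s_0) = 0$.

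The conclusion is then immediate from Corollary \ref{C:LQK-Reinhardt-shadow}: the holomorphic factor vanishes at an interior point of $\h_{(\ell+1)/\ell}$, hence $K_{(\ell+1)/\ell}$ has zeros inside $\h_{(\ell+1)/\ell} \times \h_{(\ell+1)/\ell}$. I do not anticipate any substantive analytical obstacle at this stage, because all the real work has already been carried out in Proposition \ref{P:k=1-unique-root}. The only subtlety worth explicit attention is the verification that the zero of $P_{\ell+1,\ell}$ at $(s_0,s_0)$ is not cancelled by a simultaneous vanishing of the denominator; the sign information $s_0 \in (-1,0)$ from Remark \ref{R:k=1_Limits-of-roots} makes this check routine.
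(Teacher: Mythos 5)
Your proposal is correct and follows essentially the same route as the paper: invoke Proposition \ref{P:k=1-unique-root} to get a root of $Q_{\ell+1,\ell}$ in $\D^*$, translate it via \eqref{E:def-of-Qmn} into a zero of $P_{\ell+1,\ell}$ on $\Delta^*$, and conclude through Corollary \ref{C:LQK-Reinhardt-shadow} and Proposition \ref{P:gen-Hartogs-are-their-own-Reinhardt-powers} after noting the denominator of \eqref{E:Bergman-kernel-formula-intro} is non-vanishing there. Your explicit pointwise check of the denominator (via $t^\ell - s^{\ell+1}\big|_{(s_0,s_0)} = s_0^\ell(1-s_0)$) is slightly more detailed than the paper's observation that the denominator never vanishes on $\h_{(\ell+1)/\ell}$, but the argument is the same.
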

\begin{proof}
Since $\h_{\frac{\ell+1}{\ell}}$ is Reinhardt, Corollary \ref{C:LQK-Reinhardt-shadow} shows that its Bergman kernel has zeros inside $\h_{\frac{\ell+1}{\ell}} \times \h_{\frac{\ell+1}{\ell}}$ if and only if the corresponding holomorphic factor $k_{\frac{\ell+1}{\ell}}(s,t)$ has zeros inside the Reinhardt power domain $\h_{\frac{\ell+1}{\ell}}^{(2)}$.
But now recall that Proposition \ref{P:gen-Hartogs-are-their-own-Reinhardt-powers} shows $\h_\gamma^{(2)} = \h_\gamma$ for any $\gamma > 0$.
From equation \eqref{E:Bergman-kernel-formula-intro},
\begin{equation*}
k_{\frac{\ell+1}{\ell}}(s,t) = \frac{1}{\pi^2(\ell+1)} \cdot \frac{P_{\ell+1,\ell}(s,t)}{(1-t^2)^2(t^{\ell}-s^{\ell+1})^2},
\end{equation*}
where the denominator is non-vanishing for $(s,t) \in \h_{\frac{\ell+1}{\ell}}$.
Proposition \ref{P:k=1-unique-root} shows that $Q_{\ell+1,\ell}$ has a unique root in $\D^*$, and \eqref{E:def-of-Qmn} shows that this root corresponds to a root of $P_{\ell+1,\ell}$ restricted to the punctured disc $\Delta^* = \{(s,s)\in\C^2 : s \in \D^* \} \subset \h_{\frac{\ell+1}{\ell}}$.
\end{proof}


\subsection{Case $m-n=2$}\label{SS:k=2} $\big(\frac{m}{n} = \frac{3}{1}, \frac{5}{3}, \frac{7}{5}, \frac{9}{7}, \frac{11}{9}, \frac{13}{11}, \cdots \big)$.

\begin{proposition}\label{P:k=2-form-of-poly}
Let $(m,n) = (2\ell+1, 2\ell-1)$, $\ell \in \Z^+$. 
The polynomial $Q_{2\ell+1,2\ell-1}$ takes the form
\begin{align*}
Q_{2\ell+1,2\ell-1}(s) &= \alpha_0(\ell) (1+ s^4) + \alpha_1(\ell) (s+s^3) + \alpha_2(\ell) s^2,
\end{align*}
where
\begin{align*}
\alpha_0(\ell) &= \tfrac{1}{6}\ell(\ell+1)(2\ell+1), \\
\alpha_1(\ell) &= \ell(\ell+1)(2\ell+1),\\
\alpha_2(\ell) &= \tfrac{1}{3}(2\ell+1)(5\ell^2+5\ell+3).
\end{align*}
\end{proposition}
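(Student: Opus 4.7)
Setting $(m,n) = (2\ell+1, 2\ell-1)$, the plan is to mimic the proof of Proposition~\ref{P:k=1-form-of-poly}. First I would compute the quantities $L(j)$ and $\kappa(j)$ from \eqref{E:def-of-mn-ceiling-func-kappaj-intro} on $j \in \{0, 1, \ldots, 2\ell-1\}$. Writing
\[
\frac{1 + (2\ell-1)(j+1)}{2\ell+1} = (j+1) - \frac{2j+1}{2\ell+1},
\]
and noting that the fractional offset $(2j+1)/(2\ell+1)$ lies in $(0,1)$ for $0 \le j \le \ell-1$ and in $[1, 2)$ for $\ell \le j \le 2\ell-1$, one reads off $L(j) = j+1$ on the first range and $L(j) = j$ on the second. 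Substituting into \eqref{def-of-kappaj} yields $\kappa(j) = 2(\ell-1-j)$ on the first range and $\kappa(j) = 4\ell-1-2j$ on the second, so $\kappa(j)$ sweeps out the even integers in $\{0, \ldots, 2\ell-2\}$ and odd integers in $\{1, \ldots, 2\ell-1\}$, consistent with Proposition~\ref{P:Finding-effective-coefficients}(2a).

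Next, substitute these formulas into \eqref{E:def-of-q0}--\eqref{E:def-of-q4}. The key observation is that the exponent $j - L(j) + c$ appearing inside $q_i$ (for $c \in \{1, 2, 3, 4\}$) takes only two values depending on the range of $j$, and tracking all five $q_i$ together shows the resulting polynomial is supported on the exponents $\{0, 1, 2, 3, 4\}$. Since $m-n=2$, Proposition~\ref{P:Qmn-palindromic} guarantees $Q_{2\ell+1, 2\ell-1}$ is palindromic of degree $4$, so it must automatically take the form $\alpha_0(\ell)(1 + s^4) + \alpha_1(\ell)(s + s^3) + \alpha_2(\ell)s^2$. Only three coefficients remain to be computed.

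The constant term $\alpha_0$ receives a contribution only from $q_1$ on $j \in \{0, \ldots, \ell-1\}$, and the standard power-sum formulas used in Proposition~\ref{P:k=1-form-of-poly} give
\[
\alpha_0(\ell) = \sum_{j=0}^{\ell-1}(j+1)(2\ell - 2j - 1) = \tfrac{1}{6}\ell(\ell+1)(2\ell+1).
\]
The coefficient $\alpha_1$ collects terms from $q_1$ on $\{\ell, \ldots, 2\ell-1\}$ together with $q_2$ on $\{0, \ldots, \ell-1\}$, and after reindexing these reduce again to elementary $\sum i$, $\sum i^2$ sums, yielding $\alpha_1(\ell) = \ell(\ell+1)(2\ell+1)$.

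For $\alpha_2$, rather than aggregating three separate contributions (from $q_0$, from $q_2$ on the upper range, and from $q_3$ on the lower range) by brute force, a cleaner route uses the global evaluation $Q_{m,n}(1) = P_{m,n}(1,1)$. Grouping $p_1 + p_2$ and $p_3 + p_4$ in \eqref{E:def-p0-intro}--\eqref{E:def-p4-intro} telescopes via $(\kappa(j)+1) + (m - \kappa(j) - 1) = m$, giving $p_1(1,1) + p_2(1,1) = p_3(1,1) + p_4(1,1) = \tfrac{1}{2}m^2(m-1)$, so that $P_{m,n}(1,1) = m^2 + m^2(m-1) = m^3$. For our $(m,n)$, combining $(2\ell+1)^3 = 2\alpha_0 + 2\alpha_1 + \alpha_2$ with the values above produces the stated $\alpha_2(\ell) = \tfrac{1}{3}(2\ell+1)(5\ell^2 + 5\ell + 3)$ after brief algebra. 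The main obstacle is bookkeeping — correctly splitting each $q_i$ by $j$-range and matching exponents — but every sum collapses to $\sum i$ and $\sum i^2$ over $\{1, \ldots, \ell\}$, so no new idea beyond Proposition~\ref{P:k=1-form-of-poly} is required.
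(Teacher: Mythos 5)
Your proposal is correct, and for most of the argument it follows the paper's proof step for step: the paper likewise splits the index range at $j=\ell$ to get $L(j)=j+1$ (resp.\ $j$) and $\kappa(j)=2\ell-2-2j$ (resp.\ $4\ell-1-2j$), evaluates $q_0,q_1,q_2$ from \eqref{E:def-of-q0}--\eqref{E:def-of-q2} via the elementary power-sum formulas, and invokes Proposition~\ref{P:Qmn-palindromic} to obtain $q_3,q_4$ for free, exactly as you do for $\alpha_0(\ell)$ and $\alpha_1(\ell)$. Where you genuinely diverge is the middle coefficient: the paper assembles $\alpha_2(\ell)$ directly from its three contributions ($q_0$, the $s^2$-part of $q_2$, and the $s^2$-part of $q_3$), each of the form $\tfrac{\ell(2\ell+1)(5\ell-1)}{6}$ or $(2\ell+1)^2$, whereas you extract it from the global evaluation $Q_{m,n}(1)=P_{m,n}(1,1)=m^3$, which follows from the telescoping $(\kappa(j)+1)+(m-\kappa(j)-1)=m$ applied to $p_1+p_2$ and $p_3+p_4$. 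I checked this identity and the resulting algebra: $p_1(1,1)+p_2(1,1)=p_3(1,1)+p_4(1,1)=\tfrac12 m^2(m-1)$, so $(2\ell+1)^3=2\alpha_0+2\alpha_1+\alpha_2$ indeed yields $\alpha_2(\ell)=\tfrac13(2\ell+1)(5\ell^2+5\ell+3)$. This shortcut is sound (it legitimately relies on the already-established palindromic form to know that exactly one coefficient remains undetermined), it is valid for all coprime $m>n$, and it would serve as a useful consistency check on the coefficient computations in the $m-n\ge 3$ cases as well; the trade-off is that it is less self-contained than the paper's direct summation, which produces each $q_i$ explicitly and thereby exhibits the individual symmetries $q_1\leftrightarrow q_4$ and $q_2\leftrightarrow q_3$ concretely.
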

\begin{proof}
We compute $L(j)$ and $\kappa(j)$ from \eqref{E:def-of-mn-ceiling-func-kappaj-intro}, and see that the relevant $j$ values satisfy $0 \le j \le m-2 = 2\ell - 1$.
\begin{align}
L(j) 
= \left\lceil \frac{1+(2\ell-1)(j+1)}{2\ell+1} \right\rceil
&= j+1 - \left\lfloor \frac{2(j+1)-1}{2\ell+1} \right\rfloor \notag \\
&=
\begin{cases} 
j + 1, & \quad  0 \le j \le \ell - 1,  \\ 
j, &  \quad \ell \le j \le 2\ell - 1.
\end{cases} \label{E:L(j)_k=2}
\end{align}
A short calculation now shows
\begin{equation}\label{E:kappa(j)_k=2}
\kappa(j) =
\begin{cases} 
2\ell - 2 - 2j, & \quad  0 \le j \le \ell - 1,  \\ 
4\ell - 1 - 2j, &  \quad \ell \le j \le 2\ell - 1.
\end{cases} 
\end{equation}

Equations \eqref{E:def-of-q0}--\eqref{E:def-of-q2} now give
{
\allowdisplaybreaks
\begin{align}
q_0(s) 
&= (2\ell+1)^2 s^2, \\
q_1(s)
&= \sum_{j=0}^{2\ell-1} (j+1)(\kappa(j)+1) s^{j-L(j)+1} \notag \\
&= \sum_{j=0}^{\ell-1} (j+1)(2\ell-1-2j) s^{j - (j+1) + 1} + \sum_{j=\ell}^{2\ell-1} (j+1)(4\ell-2j) s^{j - j + 1} \notag \\
&= \left( \frac{\ell(\ell+1)(2\ell+1)}{6} \right) + \left(\frac{2 \ell(\ell+1)(2\ell+1)}{3} \right) s, \label{E:q1_k=2} \\
q_2(s) 
&= \sum_{j=0}^{2\ell-1} (j+1)(2\ell -\kappa(j)) s^{j-L(j)+2} \notag \\
&= 2s \sum_{j=0}^{\ell-1} (j+1)^2 + s^2 \sum_{j=\ell}^{2\ell-1} (j+1)(2j+1-2\ell) \notag \\
&= \left(\frac{\ell(\ell+1)(2\ell+1)}{3}\right) s + \left( \frac{\ell(2\ell+1)(5\ell-1)}{6} \right) s^2. \notag
\end{align}
}

Once again, these sums are computed in closed form by using well known sums of powers formulas (see e.g. \cite{GrKuPa94}).
Proposition \ref{P:Qmn-palindromic} now shows
\begin{align}
q_3(s) &= \left(\frac{\ell(\ell+1)(2\ell+1)}{3}\right) s^3 + \left( \frac{\ell(2\ell+1)(5\ell-1)}{6} \right) s^2, \notag \\
q_4(s) &= \left( \frac{\ell(\ell+1)(2\ell+1)}{6} \right) s^4 + \left(\frac{2 \ell(\ell+1)(2\ell+1)}{3} \right) s^3. \notag
\end{align}
Now add $q_0+q_1+q_2+q_3+q_4$ to confirm $Q_{2\ell+1,2\ell-1}(s)$ takes the form given in the statement of the theorem.
\end{proof}

\begin{proposition}\label{P:k=2-double-root}
$Q_{2\ell+1,2\ell-1}(s)$ has exactly two roots inside the punctured unit disc $\D^*$.
\end{proposition}

\begin{proof}
To locate the roots of $Q_{2\ell+1,2\ell-1}(s)$, we start with the formula in Proposition \ref{P:k=2-form-of-poly} and factor out the coefficient of $(1+s^4)$:
\begin{align}
Q_{2\ell+1,2\ell-1}(s) 
&= \frac{\ell(\ell+1)(2\ell+1)}{6} \left( 1 + 6s + \left(10 + \frac{6}{\ell^2+\ell} \right) s^2 + 6s^3 + s^4 \right) \notag \\
&:= \frac{\ell(\ell+1)(2\ell+1)}{6} s^2 R_\ell(s), \label{E:Q_k=2_step1}
\end{align}
where
\begin{equation}\label{E:def-R_l(s)}
R_\ell(s) =  s^{-2} + 6 s^{-1} + \left(10 + \frac{6}{\ell^2+\ell} \right) + 6s + s^2.
\end{equation}

The roots of $Q_{2\ell+1,2\ell-1}$ are exactly the roots of $R_\ell(s)$. 
We now show that $R_\ell(s)$ is non-vanishing on the unit circle.

Suppose the contrary, that $s = e^{i\theta}$ is a root for some $\theta \in \R$. 
Then
\begin{equation}
R_\ell(e^{i\theta}) = 4\cos^2\theta + 12 \cos\theta  + \left(8 + \frac{6}{\ell^2+\ell} \right) = 0,
\end{equation}
which is equivalent to saying there exists $x = \cos\theta \in [-1,1]$ such that
\begin{equation*}
G_\ell(x) := 4x^2 + 12 x  + \beta(\ell) = 0, \qquad \mathrm{where} \qquad \beta(\ell) =  \left(8 + \frac{6}{\ell^2+\ell} \right).
\end{equation*}
The roots of $G_\ell(s)$ can be calculated explicitly:
\begin{equation}
r_1(\ell) = -\frac{3}{2} + \frac{1}{2}\sqrt{9-\beta(\ell)}, \qquad 
r_2(\ell) = -\frac{3}{2} - \frac{1}{2}\sqrt{9-\beta(\ell)}.
\end{equation}
Note that $\ell \mapsto \beta(\ell)$ is a decreasing function with $\beta(1) = 11$, $\beta(2)=9$ and $\lim_{\ell \to \infty} \beta(\ell) = 8$.

When $\ell = 1$, the roots above are the complex numbers:
\begin{equation*}
r_1(1) = -\frac{3}{2} + i\frac{\sqrt{2}}{2}, \qquad r_2(1) = -\frac{3}{2} - i\frac{\sqrt{2}}{2},
\end{equation*}
which are clearly not in the interval $[-1,1]$.
For $\ell \ge 2$, the roots are real numbers given by
\begin{equation*}
r_1(\ell) \in \left[ -\frac{3}{2},-1 \right), \qquad r_2(\ell) \in \left(-2,-\frac{3}{2} \right].
\end{equation*}
In particular, $G_\ell(x)$ has no roots in the interval $[-1,1]$, which means that $R_\ell(s)$, and consequently $Q_{2\ell+1,2\ell-1}(s)$, are non-vanishing on the unit circle.
Since $Q_{2\ell+1,2\ell-1}(s)$ is palindromic, Proposition \ref{P:Palindromic-roots} shows that exactly two of its roots lie inside $\D^*$.
\end{proof}

\begin{remark}\label{R:k=2_Limits-of-roots}
We now describe the limit behavior of the roots of $Q_{2\ell+1,2\ell-1}(s)$.
Note that by \eqref{E:Q_k=2_step1}, $s^2 R_\ell(s)$ is a monic polynomial sharing the same roots as $Q_{2\ell+1,2\ell-1}(s)$.
Now observe
\begin{align*}
\lim_{\ell \to \infty} s^2R_\ell(s) &= 1 + 6s + 10s^2 + 6s^3 + s^4 = (s+1)^2(s^2+4s+1),
\end{align*}
which has roots at $\{-1, -1, -2+\sqrt{3}, -2-\sqrt{3}\}$.
Thus one interior root moves to the unit circle ($s=-1$) as $\ell \to \infty$, but the other interior root stays inside $\D^*$ and approaches $s = -2 + \sqrt{3}$. 
In Remark \ref{R:k=1_Limits-of-roots} we showed this interior root accumulation point is also the root accumulation point when $m-n=1$.
\end{remark}

\begin{corollary}\label{C:LQK-k=2}
The Bergman kernel of $\h_{\frac{2\ell+1}{2\ell-1}}$ has zeros inside $\h_{\frac{2\ell+1}{2\ell-1}} \times \h_{\frac{2\ell+1}{2\ell-1}}$.
\end{corollary}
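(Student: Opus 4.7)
The plan is to mirror the structure of the proof of Corollary \ref{C:LQK-k=1} almost verbatim, now using the root-count for $Q_{2\ell+1,2\ell-1}$ established in Proposition \ref{P:k=1-unique-root} (the $m-n=2$ version) rather than its $m-n=1$ counterpart. The reduction is identical: Corollary \ref{C:LQK-Reinhardt-shadow} translates the existence of zeros of the Bergman kernel on $\h_{(2\ell+1)/(2\ell-1)} \times \h_{(2\ell+1)/(2\ell-1)}$ into the existence of zeros of the holomorphic factor $k_{(2\ell+1)/(2\ell-1)}$ on $\h_{(2\ell+1)/(2\ell-1)}^{(2)}$, which by Proposition \ref{P:gen-Hartogs-are-their-own-Reinhardt-powers} equals $\h_{(2\ell+1)/(2\ell-1)}$ itself.

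Next, I would invoke the explicit formula \eqref{E:Bergman-kernel-formula-intro} to write
\[
k_{(2\ell+1)/(2\ell-1)}(s,t) \;=\; \frac{1}{(2\ell+1)\pi^2} \cdot \frac{P_{2\ell+1,2\ell-1}(s,t)}{(1-t)^2\bigl(t^{2\ell-1}-s^{2\ell+1}\bigr)^2},
\]
and observe that the denominator is non-vanishing for $(s,t)\in \h_{(2\ell+1)/(2\ell-1)}$ by the very definition of this domain. Hence zeros of $k_{(2\ell+1)/(2\ell-1)}$ coincide with zeros of the numerator $P_{2\ell+1,2\ell-1}(s,t)$ inside the domain.

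It then remains to exhibit a zero of $P_{2\ell+1,2\ell-1}$ in $\h_{(2\ell+1)/(2\ell-1)}$, and the natural place to look is the punctured disc $\Delta^* = \{(s,s) : s \in \D^*\}$, which sits inside $\h_{(2\ell+1)/(2\ell-1)}$ by the discussion in Section \ref{SSS:palidromic-polys}. By the definition \eqref{E:def-of-Qmn}, roots of $Q_{2\ell+1,2\ell-1}$ in $\D^*$ correspond bijectively to roots of $P_{2\ell+1,2\ell-1}$ restricted to $\Delta^*$. Proposition \ref{P:k=1-unique-root} (the $m-n=2$ version) guarantees that $Q_{2\ell+1,2\ell-1}$ has exactly two roots inside $\D^*$, either of which furnishes the desired zero of the Bergman kernel.

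No serious obstacle is expected here: all the analytic content has been packed into the palindromic root count, and what remains is just the bookkeeping of chasing the vanishing through the factorization $K = k\circ \psi$ and the quotient $Q_{m,n} = s^{1-2n}P_{m,n}(s,s)$. The only point worth being careful about is to note that the two roots produced are honestly nonzero (so that they lie in the punctured, rather than merely the closed, disc and thus correspond to genuine points of $\Delta^* \subset \h_{(2\ell+1)/(2\ell-1)}$), which is immediate from Proposition \ref{P:Qmn-palindromic} since $Q_{2\ell+1,2\ell-1}$ has a positive constant term.
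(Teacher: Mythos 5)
Your proposal is correct and matches the paper's argument: the paper's proof of this corollary is literally ``same proof as Corollary \ref{C:LQK-k=1} with trivial modifications,'' and you have carried out exactly those modifications, reducing via Corollary \ref{C:LQK-Reinhardt-shadow} and Proposition \ref{P:gen-Hartogs-are-their-own-Reinhardt-powers} to the root count for $Q_{2\ell+1,2\ell-1}$ in $\D^*$. Your added remark that the roots are nonzero because $Q_{2\ell+1,2\ell-1}$ has a positive constant term is a small but welcome extra check the paper leaves implicit.
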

\begin{proof}
The same proof given in Corollary \ref{C:LQK-k=1} applies here with trivial modifications.
\end{proof}

\subsection{Case $m-n \ge 3$}\label{SS:k_ge_3}

We now briefly consider relatively prime pairs $m,n \in \Z^+$ with $m-n=k \ge 3$.
Though analysis becomes harder as $k$ grows, there is ample evidence to suggest that every $Q_{m,n}$ is non-vanishing on the unit circle.

To supplement the $k=1$ and $k=2$ cases in Sections \ref{SS:k=1} and \ref{SS:k=2}, the authors have also recently carried out arguments analogous to those given in Propositions \ref{P:k=1-unique-root} \and \ref{P:k=2-double-root}, proving the non-vanishing of $Q_{m,n}$ on the unit circle when $k = 3,4,6$.
These new arguments require analysis of polynomial families of degree $6, 8, 12$, respectively.
Interestingly, the (still open) $k=5$ case is more difficult because (letting $\varphi$ denote Euler's totient function; see \cite{GrKuPa94}) $\varphi(5) = 4$, while $\varphi(3) = 2$, $\varphi(4) = 2$,  $\varphi(6) = 2$.
The higher the value of $\varphi(k)$, the more separate cases must be considered, thus complicating the analysis.

The problem that seeks to locate the roots of $Q_{m,n}$ for a general pair of relatively prime integers $m > n$ is wide open.
The authors have, however, been able to prove that for each $k \in \Z^+$, there are an infinite number of explicitly given pairs $m > n$ with $m-n=k$ such that $Q_{m,n}$ is non-vanishing on the unit circle.
This argument requires detailed analysis of the polynomial coefficients considered in tandem with conditions bounding the number of real roots of cosine polynomials with real coefficients.
This material is the subject of a new paper by the authors currently in preparation.

In addition to these theoretical considerations, the authors have also used Mathematica to analyze an abundance of $Q_{m,n}$ polynomials outside the purview of the above results and have still {\em never observed a single one with a root on the unit circle}.

\begin{conjecture}\label{Conj:non-vanishing}
Let $m > n$ be positive integers with $\gcd(m,n)=1$.
The degree $2m-2n$ palindromic polynomial $Q_{m,n}$ is non-vanishing on the unit circle.
Consequently, $m-n$ of its roots lie inside $\D^*$.
\end{conjecture}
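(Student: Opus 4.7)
My plan is to convert the unit-circle non-vanishing statement into a real-variable positivity statement via the palindromic structure of $Q_{m,n}$, then establish positivity using the arithmetic data carried by $L(j)$ and $\kappa(j)$.

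First, I would pass to the associated cosine polynomial. Since $Q_{m,n}$ is palindromic of degree $2k$ with $k=m-n$, writing $s=e^{i\theta}$ gives
\[
e^{-ik\theta}Q_{m,n}(e^{i\theta}) = c_k + 2\sum_{j=1}^{k} c_{k+j}\cos(j\theta) =: F(\theta),
\]
a real trigonometric polynomial whose zeros on $[0,2\pi)$ correspond bijectively to the zeros of $Q_{m,n}$ on $\partial\D$. Expanding in Chebyshev polynomials of the first kind rewrites this as $\tilde Q_{m,n}(u)$, a degree-$k$ polynomial in $u=2\cos\theta \in [-2,2]$, and the conjecture reduces to $\tilde Q_{m,n} > 0$ on $[-2,2]$.

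Second, I would derive a closed-form expression for each coefficient $c_{k+i}$ by collecting contributions from $q_0,\ldots,q_4$ in \eqref{E:def-of-q0}--\eqref{E:def-of-q4}. Each exponent appearing there has the form $j-L(j)+\text{const}$, and Lemma \ref{L:props-of-L-and-kappa} together with the equidistribution of residues implicit in Proposition \ref{P:Finding-effective-coefficients} should pin down, for each target exponent $k+i$, exactly which terms contribute. This yields $c_{k+i}$ as a sum indexed by residues of $nj$ modulo $m$. Third, I would prove $F(\theta) > 0$. At $\theta = 0$ this is immediate from $Q_{m,n}(1) > 0$ (all coefficients are positive by Proposition \ref{P:Qmn-palindromic}); the genuine issue is $\theta = \pi$, where the alternating signs cause cancellation. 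Motivated by the $k=1,2$ computations in Propositions \ref{P:k=1-form-of-poly} and \ref{P:k=2-form-of-poly}, which exhibit a strictly decreasing sequence $c_k > c_{k+1} > \cdots > c_{2k}$, the natural route is to establish both monotonicity and convexity of the coefficient sequence; Fej\'er's classical positivity criterion for cosine polynomials with monotone convex non-negative coefficients would then give $F(\theta) > 0$ uniformly in $\theta$.

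The principal obstacle is proving such monotonicity and convexity uniformly in $(m,n)$. Control of $L(j)$ requires understanding the fine structure of the jump pattern of $\lceil n(j+1)/m\rceil$, which is governed by the continued fraction expansion of $n/m$. As the authors note, the number of qualitatively distinct patterns scales with $\varphi(k)$, so case-by-case enumeration will not succeed. A more structural resolution seems necessary, and two avenues look promising: either a generating-function identity consolidating $q_0+q_1+q_2+q_3+q_4$ into a manifestly positive closed form, or a sum-of-squares decomposition of $\tilde Q_{m,n}$ pulled back from the cover $\D\times\D^*$ via Bell's transformation rule applied to the monomial quotient map $\Phi_{m,n}$. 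The latter is especially appealing, since the Bergman kernel on the cover is explicit and the $\Gamma_{m,n}$-averaging producing $K_{m/n}$ carries latent positivity not yet exploited in the existing analysis.
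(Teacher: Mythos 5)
This statement is an open conjecture in the paper, not a theorem: the authors prove it only for $m-n=1$ and $m-n=2$ (Sections \ref{SS:k=1} and \ref{SS:k=2}), report unpublished arguments for $k=3,4,6$, and explicitly describe the general case as ``wide open.'' So there is no proof in the paper to compare against, and your submission is likewise not a proof but a research plan whose principal gap you yourself acknowledge. That said, your first two steps are sound and consistent with what the paper actually does in its special cases: passing to the real cosine polynomial $F(\theta)=e^{-ik\theta}Q_{m,n}(e^{i\theta})$ is exactly the reduction used in the proof for $m-n=2$, where $R_\ell(e^{i\theta})$ is rewritten as a quadratic in $\cos\theta$.

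The concrete problem is your proposed closing mechanism. Fej\'er-type positivity for cosine polynomials with non-negative, decreasing, \emph{convex} coefficients cannot work here, because the coefficient sequence of $Q_{m,n}$ is already non-convex in the case $m-n=2$ that the paper settles. Take $(m,n)=(7,5)$, i.e.\ $\ell=3$ in Proposition \ref{P:k=2-form-of-poly}: the coefficients are $\alpha_0=14$, $\alpha_1=84$, $\alpha_2=147$, and the convexity test for $F(\theta)=\alpha_2+2\alpha_1\cos\theta+2\alpha_0\cos 2\theta$ requires $\alpha_2-2\alpha_1+\alpha_0\ge 0$, whereas $147-168+14=-7<0$. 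More structurally, the normalized value at $\theta=\pi$ is $R_\ell(-1)=6/(\ell^2+\ell)\to 0$ (equivalently, a root of $Q_{2\ell+1,2\ell-1}$ tends to $s=-1$ as $\ell\to\infty$, per Remark \ref{R:k=2_Limits-of-roots}), so the positivity you need is asymptotically marginal and no criterion carrying uniform slack --- Fej\'er's included --- can detect it; the argument must be sharp at $\theta=\pi$. Your suggested alternatives (a manifestly positive closed form for $q_0+\cdots+q_4$, or positivity pulled back through $\Phi_{m,n}$ and the $\Gamma_{m,n}$-average) are reasonable directions, but as written they are aspirations rather than arguments, and the proposal does not constitute a proof of the conjecture.
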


If this conjecture holds it would be somewhat remarkable, as palindromic polynomials are generically expected to have roots on the unit circle.
Indeed, a beautiful result of Dunnage \cite{Dunn1966} from 1966 shows that, given a single randomly generated palindromic polynomial with real coefficients, the expected value of the percentage of its zeros lying on the unit circle is $\frac{1}{\sqrt{3}}$, or about 57.7\%.

If Conjecture \ref{Conj:non-vanishing} can be proved, it would then immediately imply that the Bergman kernel of $\h_{m/n}$ has zeros inside $\h_{m/n} \times \h_{m/n}$ for every positive rational number $\frac{m}{n} \neq 1$.

\bibliographystyle{acm}
\bibliography{EdhMat24}

\begin{thebibliography}{10}

\bibitem{Almugh23}
{\sc Almughrabi, R.}
\newblock Bergman kernels of two dimensional monomial polyhedra.
\newblock {\em Complex Anal. Oper. Theory 17}, 6 (2023), Paper No. 103, 21.

\bibitem{belltransactions}
{\sc Bell, S.~R.}
\newblock The {B}ergman kernel function and proper holomorphic mappings.
\newblock {\em Trans. Amer. Math. Soc. 270}, 2 (1982), 685--691.

\bibitem{bcem}
{\sc Bender, C., Chakrabarti, D., Edholm, L., and Mainkar, M.}
\newblock {$L^p$}-regularity of the {B}ergman projection on quotient domains.
\newblock {\em Canad. J. Math. 74}, 3 (2022), 732--772.

\bibitem{bergmanbook}
{\sc Bergman, S.}
\newblock {\em The kernel function and conformal mapping}, revised~ed.
\newblock Mathematical Surveys, No. V. American Mathematical Society, Providence, R.I., 1970.

\bibitem{Boas98}
{\sc Boas, H.~P.}
\newblock Lu {Q}i-{K}eng's problem.
\newblock vol.~37. 2000, pp.~253--267.
\newblock Several complex variables (Seoul, 1998).

\bibitem{BFS99}
{\sc Boas, H.~P., Fu, S., and Straube, E.~J.}
\newblock The {B}ergman kernel function: explicit formulas and zeroes.
\newblock {\em Proc. Amer. Math. Soc. 127}, 3 (1999), 805--811.

\bibitem{ChakEdh24}
{\sc Chakrabarti, D., and Edholm, L.~D.}
\newblock Projections onto {$L^p$}-{B}ergman spaces of {R}einhardt domains.
\newblock {\em Adv. Math. 451\/} (2024), Paper No. 109790, 46.

\bibitem{ChEdMc19}
{\sc Chakrabarti, D., Edholm, L.~D., and McNeal, J.~D.}
\newblock Duality and approximation of {B}ergman spaces.
\newblock {\em Adv. Math. 341\/} (2019), 616--656.

\bibitem{CKMM20}
{\sc Chakrabarti, D., Konkel, A., Mainkar, M., and Miller, E.}
\newblock Bergman kernels of elementary {R}einhardt domains.
\newblock {\em Pacific J. Math. 306}, 1 (2020), 67--93.

\bibitem{chakzeytuncu}
{\sc Chakrabarti, D., and Zeytuncu, Y.~E.}
\newblock {$L^p$} mapping properties of the {B}ergman projection on the {H}artogs triangle.
\newblock {\em Proc. Amer. Math. Soc. 144}, 4 (2016), 1643--1653.

\bibitem{ChrKoe23}
{\sc Christopherson, A.~B., and Koenig, K.~D.}
\newblock Weak-type regularity of the {B}ergman projection on rational {H}artogs triangles.
\newblock {\em Proc. Amer. Math. Soc. 151}, 4 (2023), 1643--1653.

\bibitem{RongGaoYau10}
{\sc Du, R., Gao, Y., and Yau, S.~S.}
\newblock Explicit construction of moduli space of bounded complete {R}einhardt domains in {$\Bbb C^n$}.
\newblock {\em Comm. Anal. Geom. 18}, 3 (2010), 601--626.

\bibitem{RongSTYau09}
{\sc Du, R., and Yau, S.}
\newblock Higher order {B}ergman functions and explicit construction of moduli space for complete {R}einhardt domains.
\newblock {\em J. Differential Geom. 82}, 3 (2009), 567--610.

\bibitem{Dunn1966}
{\sc Dunnage, J. E.~A.}
\newblock The number of real zeros of a random trigonometric polynomial.
\newblock {\em Proc. London Math. Soc. (3) 16\/} (1966), 53--84.

\bibitem{Edh-Thesis}
{\sc Edholm, L.~D.}
\newblock {\em The {B}ergman kernel of fat {H}artogs triangles}.
\newblock ProQuest LLC, Ann Arbor, MI, 2016.
\newblock Thesis (Ph.D.)--The Ohio State University.

\bibitem{Edh16}
{\sc Edholm, L.~D.}
\newblock {B}ergman theory of certain generalized {H}artogs triangles.
\newblock {\em Pacific J. Math. 284}, 2 (2016), 327--342.

\bibitem{EdhMcN16}
{\sc Edholm, L.~D., and McNeal, J.~D.}
\newblock The {B}ergman projection on fat {H}artogs triangles: ${L}^p$ boundedness.
\newblock {\em Proc. Amer. Math. Soc. 144}, 5 (2016), 2185--2196.

\bibitem{EdhMcN16b}
{\sc Edholm, L.~D., and McNeal, J.~D.}
\newblock Bergman subspaces and subkernels: degenerate ${L}^p$ mapping and zeroes.
\newblock {\em J. Geom. Anal. 27}, 4 (2017), 2658--2683.

\bibitem{EdhMcN20}
{\sc Edholm, L.~D., and McNeal, J.~D.}
\newblock Sobolev mapping of some holomorphic projections.
\newblock {\em J. Geom. Anal. 30}, 2 (2020), 1293--1311.

\bibitem{EdiZwo05}
{\sc Edigarian, A., and Zwonek, W.}
\newblock Geometry of the symmetrized polydisc.
\newblock {\em Arch. Math. (Basel) 84}, 4 (2005), 364--374.

\bibitem{GhoZwo25}
{\sc Ghosh, G., and Zwonek, W.}
\newblock 2-proper holomorphic images of classical {C}artan domains.
\newblock {\em Indiana Univ. Math. J. 74}, 3 (2025), 575--603.

\bibitem{GrKuPa94}
{\sc Graham, R.~L., Knuth, D.~E., and Patashnik, O.}
\newblock {\em Concrete mathematics}, second~ed.
\newblock Addison-Wesley Publishing Company, Reading, MA, 1994.
\newblock A foundation for computer science.

\bibitem{HuoWick2020a}
{\sc Huo, Z., and Wick, B.~D.}
\newblock Weak-type estimates for the {B}ergman projection on the polydisc and the {H}artogs triangle.
\newblock {\em Bull. Lond. Math. Soc. 52}, 5 (2020), 891--906.

\bibitem{HuoWick2020b}
{\sc Huo, Z., and Wick, B.~D.}
\newblock Weighted estimates for the {B}ergman projection on the {H}artogs triangle.
\newblock {\em J. Funct. Anal. 279}, 9 (2020), 108727, 34.

\bibitem{JarPflBook08}
{\sc Jarnicki, M., and Pflug, P.}
\newblock {\em First steps in several complex variables: {R}einhardt domains}.
\newblock EMS Textbooks in Mathematics. European Mathematical Society (EMS), Z\"urich, 2008.

\bibitem{KhLiTh19}
{\sc Khanh, T.~V., Liu, J., and Thuc, P.~T.}
\newblock Bergman-{T}oeplitz operators on fat {H}artogs triangles.
\newblock {\em Proc. Amer. Math. Soc. 147}, 1 (2019), 327--338.

\bibitem{Krantz_scv_book}
{\sc Krantz, S.~G.}
\newblock {\em Function Theory of Several Complex Variables, 2nd Ed.}
\newblock Wadsworth \& Brooks/Cole Mathematics Series. Wadsworth \& Brooks/Cole, 1992.

\bibitem{Kra06}
{\sc Krantz, S.~G.}
\newblock A new proof and a generalization of {R}amadanov's theorem.
\newblock {\em Complex Var. Elliptic Equ. 51}, 12 (2006), 1125--1128.

\bibitem{Krantz_BergmanBook1_2013}
{\sc Krantz, S.~G.}
\newblock {\em Geometric analysis of the {B}ergman kernel and metric}, vol.~268 of {\em Graduate Texts in Mathematics}.
\newblock Springer, New York, 2013.

\bibitem{Kruz88}
{\sc Kruzhilin, N.~G.}
\newblock Holomorphic automorphisms of hyperbolic {R}einhardt domains.
\newblock {\em Izv. Akad. Nauk SSSR Ser. Mat. 52}, 1 (1988), 16--40, 240.

\bibitem{LQK-66}
{\sc Lu, Q.-K.}
\newblock On {K}aehler manifolds with constant curvature.
\newblock {\em Acta Math. Sinica 16\/} (1966), 269--281.

\bibitem{NikZwo06}
{\sc Nikolov, N., and Zwonek, W.~o.}
\newblock The {B}ergman kernel of the symmetrized polydisc in higher dimensions has zeros.
\newblock {\em Arch. Math. (Basel) 87}, 5 (2006), 412--416.

\bibitem{Rudin-PolydiscBook}
{\sc Rudin, W.}
\newblock {\em Function theory in polydiscs}.
\newblock W. A. Benjamin, Inc., New York-Amsterdam, 1969.

\bibitem{Rudin-BallBook}
{\sc Rudin, W.}
\newblock {\em Function theory in the unit ball of {$\Bbb C^n$}}.
\newblock Classics in Mathematics. Springer-Verlag, Berlin, 2008.
\newblock Reprint of the 1980 edition.

\bibitem{Shim88}
{\sc Shimizu, S.}
\newblock Automorphisms and equivalence of bounded {R}einhardt domains not containing the origin.
\newblock {\em Tohoku Math. J. (2) 40}, 1 (1988), 119--152.

\bibitem{Siu_LeviProblemSurvey}
{\sc Siu, Y.~T.}
\newblock Pseudoconvexity and the problem of {L}evi.
\newblock {\em Bull. Amer. Math. Soc. 84}, 4 (1978), 481--512.

\bibitem{Try13}
{\sc Trybu{\l}a, M.}
\newblock Proper holomorphic mappings, {B}ell's formula, and the {L}u {Q}i-{K}eng problem on the tetrablock.
\newblock {\em Arch. Math. (Basel) 101}, 6 (2013), 549--558.

\bibitem{Zapalowski17}
{\sc Zapa{\l}owski, P.}
\newblock Proper holomorphic mappings between generalized {H}artogs triangles.
\newblock {\em Ann. Mat. Pura Appl. (4) 196}, 3 (2017), 1055--1071.

\end{thebibliography}
\end{document}